\documentclass[leqno,12pt]{amsart}
\usepackage{amssymb}
\usepackage{}
\usepackage{bbm}
\usepackage{graphicx}
\usepackage{amsfonts}
\usepackage{mathrsfs}
\usepackage{amssymb,amsmath,latexsym,amsfonts,amsbsy,amsthm,mathtools,graphicx,CJKutf8,CJKnumb,CJKulem,color}
\usepackage{mathrsfs,cite}
\usepackage{colortbl}
\usepackage{txfonts}
\usepackage{stmaryrd}
\usepackage{appendix}

\usepackage{verbatim}
\usepackage{float}

\usepackage{yhmath}

\allowdisplaybreaks

\usepackage{pdfsync}
\numberwithin{equation}{section}

\allowdisplaybreaks

\def\R{\mathbb R}

\usepackage[left=1 in, right=1 in,top=1 in, bottom=1 in]{geometry}

\def\XXint#1#2#3{{\setbox0=\hbox{$#1{#2#3}{\int}$ }
\vcenter{\hbox{$#2#3$ }}\kern-.6\wd0}}

\newcommand{\beq}{\begin{equation}}
\newcommand{\eeq}{\end{equation}}
\newcommand{\ben}{\begin{eqnarray}}
\newcommand{\een}{\end{eqnarray}}
\newcommand{\beno}{\begin{eqnarray*}}
\newcommand{\eeno}{\end{eqnarray*}}
\newtheorem{theorem}{Theorem}[section]

\newtheorem{lemma}{Lemma}[section]
\newtheorem{proposition}{Proposition}[section]

\numberwithin{equation}{section}
\begin{document}
\title[]{on the Minkowski problem with respect\\
to a mixed Euclidean-Gaussian density}

\author{Stephanie Mui}
\address{Department of Mathematics, Georgia Institute of Technology, 686 Cherry ST NW, Atlanta, GA 30332, USA}
\email[S. Mui]{smui3@gatech.edu}

\author{Lei Qin}
\address{School  of Mathematics, Hunan University,  Changsha, 410082, China}
\email[L. Qin]{qlhnumath@hnu.edu.cn}

\author{Sinan Wang}
\address{School  of Mathematics, Hunan University,  Changsha, 410082, China}
\email[S.N. Wang]{wangsinan@hnu.edu.cn}

\date{\today}

\begin{abstract}
In this paper, we pose a new class of Minkowski problems corresponding to the mixed Euclidean-Gaussian volume. Using the flow method, we prove the existence of smooth normalized solutions for the corresponding Monge-Amp\`{e}re-type equation in the symmetric case. Then, by approximation, we obtain the existence of origin-symmetric solutions to the mixed Euclidean-Gaussian Minkowski problem.
\end{abstract}

\subjclass[2020]{35J60, 52A38, 52A40}

\keywords{Mixed Euclidean-Gaussian Minkowski problem; Smooth solution; Monge-Amp\`ere equation.}

\maketitle
\section{Introduction}
We consider a basic model given by the Cartesian product of two Euclidean spaces
\begin{equation*}
\mathbb{R}^n=\mathbb{R}^s \times\mathbb{R}^k =\{z=(x,y): x\in\mathbb{R}^s , y\in\mathbb{R}^k \},~~~~~~~n=s+k\geq1,
\end{equation*}
equipped with the product-type density
\begin{equation}\label{E:82}
d\mu=\frac{e^{-\frac{|x|^2}{2}}}{(2\pi)^{\frac{s}{2}}}dxdy,~~~~~~(x,y)\in\mathbb{R}^n.
\end{equation}
If $K\subset\mathbb{R}^n$ has $C^1$-boundary, then the weighted volume of the mixed Euclidean-Gaussian type is given by
\begin{equation}\label{E:001}
V_{\text{mix}}(K)=\frac{1}{(2\pi)^{\frac{s}{2}}}\int_K e^{-\frac{|x|^2}{2}}dz=\frac{1}{(2\pi)^{\frac{s}{2}}}\int_K e^{-\frac{1}{2}\sum_{i=1}^s |(z\cdot \mathbf{e}_i)|^2}dz,
\end{equation}
where $\{\mathbf{e}_1,\ldots,\mathbf{e}_n\}$ is the standard orthonormal basis. The corresponding notion of perimeter is given by
\begin{equation}\label{E:83}
P_{\text{mix}}(K)=\frac{1}{(2\pi)^{\frac{s}{2}}}\int_{\partial K} e^{-\frac{|x|^2}{2}}d\mathcal{H}^{n-1}(z)=\frac{1}{(2\pi)^{\frac{s}{2}}}\int_{\partial K} e^{-\frac{1}{2}\sum_{i=1}^s |(z\cdot \mathbf{e}_i)|^2}d\mathcal{H}^{n-1}(z).
\end{equation}
The relevant isoperimetric problem was first posed by Fusco-Maggi-Pratelli in \cite{FMP11}
, where they studied the existence, symmetry, and regularity of the minimizers. 
Inspired by the results in \cite{FMP11}, we consider the Minkowski problem related to the mixed Euclidean-Gaussian volume.

It is well-known that \eqref{E:001} is actually the Euclidean volume $V(K)$ when $s=0$. Variational calculations performed on this volume yields
\begin{equation}\label{E:111}
\lim_{t\rightarrow0}\frac{V(K+tL)-V(K)}{t}=\int_{\mathbb{S}^{n-1}}h_{L}(u)dS_{K}(u),
\end{equation}
where $h_{L}: \mathbb{S}^{n-1}\rightarrow \mathbb{R}$ is the support function of $L$ and $S_{K}$ is the surface area measure of $K$. 

Minkowski \cite{M97} first posed the classical Minkowski problem, read as following: \textit{given a finite Borel measure $\mu$ on $\mathbb{S}^{n-1}$, what are the  necessary and sufficient conditions on $\mu$ so that $\mu$ is the surface area measure $S(K, \cdot)$ of a convex body $K$ in $\mathbb{R}^n$?} The influence of this problem is widespread, with deep connections to functional analysis, differential geometry, and nonlinear partial differential equations. In recent years, an increasing number of mathematicians have dedicated themselves to investigating these types of questions. In the case of the classical Minkowski problem, when the given measure is either discrete or has a continuous density, Minkowski \cite{M97,M03} proved the existence and uniqueness of solutions to this problem by utilizing the variational method. Several years later, based on the same technique, Aleksandrov \cite{A38,A39} and Fenchel-Jessen \cite{FJ38} independently solved the problem for arbitrary measures. In the smooth case, from the perspective of partial differential equations, the classical Minkowski problem is equivalent to solving a degenerate fully nonlinear Monge-Amp\`{e}re equation on $\mathbb{S}^{n-1}$,
\[
\det(h_{ij}(u)+h(u)\delta_{ij})=f(u).
\]

Over the past three decades, numerous Minkowski-type problems have been introduced, similar to the one mentioned above. Each problem arises from asking an analogous question to the classical one, with the surface area measure replaced by other geometric measures arising from the differentiation of geometric invariants. In the context of smoothness, this problem is equivalent to other specific fully nonlinear elliptic PDEs of varying natures. Prominent Minkowski-type problems include the $L_p$ Minkowski problem (see \cite{CW00,HLYZ05,L93}), the logarithmic Minkowski problem (see \cite{BLYZ13}), the dual Minkowski problem (see \cite{HLYZ16}), and so on. For more information on these Minkowski-type problems and the state of the art in this field, one can refer to the survey by Huang-Yang-Zhang \cite{HYZ25}.

In this paper, we consider a new class of Minkowski problems, which can be posed for the measure in \eqref{E:001} for $0<s<n$. In particular, the relevant geometric invariant is known as the mixed Euclidean-Gaussian volume $V_{\text{mix}}(K)$ stated as in \eqref{E:001}. 

When $s=n$ in \eqref{E:001}, the weighted volume becomes
\begin{equation}\label{E:004}
\gamma_{n}(K)=\frac{1}{(2\pi)^{\frac{n}{2}}}\int_K e^{-\frac{|x|^2}{2}}dx,
\end{equation}
which is known as Gaussian probability measure. Unlike Lebesgue measure, the Gaussian probability measure is neither translation-invariant nor homogeneous, which is the main challenge when addressing the relevant Minkowski-type problem. Moreover, the density decays exponentially fast as $|x|\rightarrow
\infty$. Similar to the classical case, ``differentiating" \eqref{E:004} on the set of convex bodies yields
\[
\lim_{t\rightarrow0}\frac{\gamma_{n}(K+tL)-\gamma_{n}(K)}{t}=\int_{\mathbb{S}^{n-1}}h_{L}(u)dS_{\gamma_{n},K}(u),
\]
and this uniquely defines the Gaussian surface area measure $S_{\gamma_n,K}$ of $K$. When $K\in \mathcal{K}_{o}^n$ has $C^2$ boundary with positive curvature, $S_{\gamma_n,K}$ is absolutely continuous with respect to the spherical Lebesgue measure and can be written as
\begin{equation*}
dS_{\gamma_{n}, K}(u)={(2\pi)^{-\frac{n}{2}}}e^{-\frac{1}{2}(h^{2}_{K}(u)+|\nabla h_{K}(u)|^2)}\det (\nabla^{2}h_{K}(u)+h_{K}(u)I)\quad \text{on}\mathbb{\ \ } \mathbb{S}^{n-1},
\end{equation*}
where $h_K:\mathbb{S}^{n-1}\rightarrow \mathbb{R}$ is the support function of $K$, $\nabla$ and $\nabla^2$ are gradient and Hessian operators with respect to an orthonormal frame on $\mathbb{S}^{n-1}$, and $I$ is the identity matrix. Therefore, the corresponding Gaussian Minkowski problem emerges in \cite{HXZ21}: \textit{given a finite Borel measure $\mu$ on $\mathbb{S}^{n-1}$, what are the sufficient and necessary conditions on $\mu$ such that $\mu$ is the Gaussian surface area measure $S_{V_{n}, K}$ of a convex body $K$?} When the given measure $\mu$ has a density $d\mu=fdu$, then the Gaussian Minkowski problem is equivalent to solving the following Monge-Amp\`{e}re type equation on $\mathbb{S}^{n-1}$,
\[
{(2\pi)^{-\frac{n}{2}}}e^{-\frac{1}{2}(h^{2}_{K}(u)+|\nabla h_{K}(u)|^2)}\det(h_{ij}(u)+h(u)\delta_{ij})=f(u).
\]
For further results on the existence, uniqueness, regularity, and generalizations of the Minkowski problem for the Gaussian measure, see for example  \cite{FLX23, L22, FHX23, CHLZ23, KL2023, HQ24, FJL26, FL26, H25, Hu25}.

In this paper, we establish the variational formula of the mixed Euclidean-Gaussian volume $V_{\text{mix}}(K)$ in the class of convex bodies.  In particular, we show that
\begin{equation}
\lim_{t\rightarrow0}\frac{V_{\text{mix}}(K+tL)-V_{\text{mix}}(K)}{t}=\int_{\mathbb{S}^{n-1}}h_{L}(u)dS_{V_{\text{mix}},K}(u),
\end{equation}
for any convex bodies $K$ and $L$ containing the origin in their interiors and where $dS_{V_{\text{mix}},K}$ is the mixed Euclidean-Gaussian surface area measure. The proof is presented in Theorem \ref{thm5}.  From this, we pose the corresponding Minkowski problem.

\textbf{The mixed Euclidean-Gaussian Minkowski problem.} \textit{Given a finite Borel measure $\mu$ on $\mathbb{S}^{n-1}$, what are the necessary and sufficient conditions on $\mu$ so that $\mu$ is the mixed Euclidean-Gaussian surface area measure $S_{V_{\mathrm{mix}}, K}$ of a convex body $K$ with $o\in int K$? If $K$ exists, to what extent is it unique?}

When $K$ is sufficiently smooth, its mixed Euclidean-Gaussian surface area measure is absolutely continuous with respect to the spherical Lebesgue measure:
\begin{equation*}
dS_{V_{\text{mix}}, K}(u)=\frac{1}{(2\pi)^{\frac{s}{2}}}e^{-\frac{1}{2}\sum_{i=1}^s |(h_{K}(u)u+\nabla h_{K}(u))\cdot \mathbf{e}_i)|^2}\det (\nabla^{2}h_{K}(u)+h_{K}(u)I)du.
\end{equation*}

When the given measure $\mu$ has a density $d\mu=fdu$, the mixed Euclidean-Gaussian Minkowski problem is equivalent to solving the following Monge-Amp\`{e}re type equation on $\mathbb{S}^{n-1}$,
\begin{equation}\label{E:11}
{(2\pi)^{-\frac{s}{2}}}e^{-\frac{1}{2}\sum_{i=1}^s |(h_K (u)u+\nabla_{\mathbb{S}^{n-1}}h_K (u))\cdot {\bf e}_i |^2} \det(\nabla^2 h_K (u) +h_K (u)\delta_{ij})=f(u).
\end{equation}

We use the flow method to show the existence of solutions to the PDE in Theorem 1.1, assuming sufficient smoothness of the given data. This result is stated as follows:   
\begin{theorem}\label{thm2}
Let $f\in C^{2,\alpha}(\mathbb{S}^{n-1})$ be a positive even function on the sphere $\mathbb{S}^{n-1}$, $0<\alpha<1$. Then there exists a  $C^{4,\alpha}$ origin-symmetric convex body $K\subset\mathbb{R}^n$  such that
\begin{equation}\label{E:1.1}
\frac{1}{(2\pi)^{\frac{s}{2}}}e^{-\frac{1}{2}\sum_{i=1}^s |(h_K (u)u+\nabla_{\mathbb{S}^{n-1}}h_K (u))\cdot {\bf e}_i |^2} \det(\nabla^2 h_K (u) +h_K (u)\delta_{ij})=\tau f(u),
\end{equation}
where
\[
\tau=\frac{\int_{\mathbb{S}^{n-1}}e^{-\frac{1}{2}\rho^2\sum_{i=1}^s|u\cdot {\bf e}_i|^2}\rho^n du}{(2\pi)^{\frac{s}{2}}\int_{\mathbb{S}^{n-1}}f^{-1}hdx}.
\]
\end{theorem}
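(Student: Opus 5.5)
We will prove Theorem \ref{thm2} with a normalized, volume-preserving-type anisotropic curvature flow, in the spirit of the flow approach to the Gaussian Minkowski problem. Start from a smooth, origin-symmetric, uniformly convex $K_0$ (for example a ball). Let $X(\cdot,t)$ be the Gauss-map parametrization and $h(\cdot,t)$ the support function. We consider
\begin{equation*}
\partial_t h(u,t)=-\lambda(t)\, f(u)\,(2\pi)^{\frac{s}{2}}\, e^{\frac{1}{2}\sum_{i=1}^s|(hu+\nabla h)\cdot \mathbf{e}_i|^2}\,\frac{h}{\det(\nabla^2h+hI)}+h ,
\end{equation*}
i.e.\ $\partial_t h=-\lambda f\,h\,\mathcal{K}\,e^{\frac12|x|^2}(2\pi)^{s/2}+h$, where $\mathcal{K}$ is the Gauss curvature and $x$ is the $\mathbb{R}^s$-component of $X$. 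The normalizing factor is
\begin{equation*}
\lambda(t)=\frac{\int_{\mathbb{S}^{n-1}}e^{-\frac12\rho^2\sum_{i\le s}|u\cdot\mathbf{e}_i|^2}\rho^n\,du}{(2\pi)^{\frac s2}\int_{\mathbb{S}^{n-1}}f^{-1}h\,du}.
\end{equation*}
It is chosen so that $V_{\rm mix}(K_t)$, which equals $\frac{1}{n}(2\pi)^{-s/2}\int_{\mathbb{S}^{n-1}}\!\int_0^{\rho}\cdots$ in polar form, stays fixed along the flow. Evenness of $f$ and of $K_0$ is preserved by uniqueness, so $K_t$ remains origin-symmetric.

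Alongside the flow we use a monotone functional
\begin{equation*}
J(t)=\int_{\mathbb{S}^{n-1}} f^{-1}\log h\,du \quad\text{or}\quad \int f^{-1}h\,du .
\end{equation*}
We will show it is monotone (nonincreasing) by Cauchy--Schwarz, using the constraint $V_{\rm mix}(K_t)=V_{\rm mix}(K_0)$ and the identity $\partial_t\rho/\rho=\partial_t h/h$ in the radial--support correspondence.

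\textbf{$C^0$ estimates.} These are the main obstacle, since $V_{\rm mix}$ is neither homogeneous nor translation invariant, and in the $y$-directions the density does not decay. For the upper bound we use origin symmetry. If $h_{K_t}(u_t)=R_t\to\infty$, then $K_t$ contains the segment $[-R_tu_t,R_tu_t]$, so $h_{K_t}(u)\ge R_t|u\cdot u_t|$. Then $\int f^{-1}h\,du\gtrsim R_t$, which contradicts the monotone bound on $J$; if the chosen $J$ is logarithmic, we instead get a contradiction with fixed $V_{\rm mix}$ combined with $J$. For the lower bound, symmetric convex bodies of bounded diameter with $\min h\to0$ collapse into a slab, forcing $V_{\rm mix}(K_t)\to0$ because the density is bounded by $(2\pi)^{-s/2}$. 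This contradicts the fixed mixed volume. The point requiring care is that unbounded $y$-directions are harmless only once the diameter is bounded; we therefore get the upper bound first via $J$ and the mixed volume. From two-sided bounds on $h$ we obtain $|\nabla h|\le C$ and bounds on $\lambda(t)$.

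\textbf{Curvature bounds and convergence.} Next we bound the Gauss curvature from above and below by Tso-type auxiliary functions $Q=\frac{-\partial_th+h}{h-\varepsilon_0}$, using the maximum principle. For the principal radii we apply the maximum principle to the largest eigenvalue of $\nabla^2h+hI$ (plus $\log$ terms), as in the Gaussian case. The exponential weight contributes only lower-order terms, controlled by the $C^0$ and $C^1$ bounds. This makes the flow uniformly parabolic. Krylov--Safonov and Schauder theory then give uniform $C^{4,\alpha}$ bounds, since $f\in C^{2,\alpha}$, and long-time existence follows. Monotonicity of $J$ yields $\int_0^\infty |J'|\,dt<\infty$, so along a sequence $t_j\to\infty$ we have $J'\to0$. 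The equality case of Cauchy--Schwarz forces $\lambda f h\mathcal{K} e^{\frac12|x|^2}(2\pi)^{s/2}\equiv h$ on the limit. Hence the limit $K$ satisfies \eqref{E:1.1} with $\tau=1/\lambda_\infty$, and we will check that this agrees with the stated normalization for $\tau$.
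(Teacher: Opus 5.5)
\textbf{Verdict.} Your route is genuinely different from the paper's. It goes through once the $f$ versus $f^{-1}$ bookkeeping listed under Repairs is fixed.

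\textbf{Comparison.} The paper runs an expanding flow,
$\partial_t h=fh\,e^{-\frac12\sum_{i\le s}|X\cdot\mathbf e_i|^2}\det(h_{ij}+h\delta_{ij})-(2\pi)^{\frac s2}h\,\tau(t)$, whose speed is proportional to $h/\mathcal K$.
\begin{itemize}
\item Its normalization keeps the linear functional $\int f^{-1}h$ constant, which gives the upper $C^0$ bound.
\item $V_{\mathrm{mix}}(K_t)$ is nondecreasing by Cauchy--Schwarz. This gives the lower bound through the symmetric slab argument, and subsequential convergence.
\item For curvature, it bounds $\det(h_{ij}+h\delta_{ij})$ from above by a maximum principle for $\frac{1}{1-\varepsilon M}\frac{HS}{h}$. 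It then passes to the polar bodies $K_t^*$, whose support functions evolve by a Gauss-curvature-type flow, to bound the principal radii.
\end{itemize}
You run the mirror image: a contracting normalized Gauss curvature flow in which $V_{\mathrm{mix}}$ is conserved and $\int fh$ is monotone (the scheme known from the $L_p$ Gaussian Minkowski problem). The $C^0$ arguments are the same in spirit: a linear functional of $h$ for the upper bound, and the slab argument for the lower bound.

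Your gain is that every curvature estimate is done on $K_t$ itself, with no polarity.
\begin{itemize}
\item Tso's quantity $Q=\frac{h-\partial_t h}{h-\varepsilon_0}$ needs no second derivatives of the weight. They are absorbed into $Q_{ij}$ at the maximum, and at a critical point $\partial_t X=(h-(h-\varepsilon_0)Q)x+(1-Q)\nabla h=O(1+Q)$. So it yields $\det(h_{ij}+h\delta_{ij})\ge c$.
\item In the largest-eigenvalue estimate, the Hessian of $\log\phi$, with $\phi=(2\pi)^{\frac s2}e^{\frac12\sum_{i\le s}|X\cdot\mathbf e_i|^2}$, enters $\partial_t b_{11}$ through $-(h-\partial_t h)\sum_{i\le s}(\nabla_1X\cdot\mathbf e_i)^2\le 0$. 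That is a favourable sign.
\end{itemize}
The paper's polar flow in its principal-radii lemma is essentially your flow written for $K_t^*$, with a more awkward weight.

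\textbf{Repairs.}
\begin{enumerate}
\item Write $\varphi=\phi^{-1}$. Along your flow, $\frac{d}{dt}V_{\mathrm{mix}}(K_t)=\int_{\mathbb S^{n-1}}\varphi(X)\,\partial_t h\,\det b\,dx=\int\varphi(X)h\det b\,dx-\lambda\int fh\,dx$. So the denominator of $\lambda$ must be $(2\pi)^{\frac s2}\int fh$, not $(2\pi)^{\frac s2}\int f^{-1}h$.
\item The functional for which your Cauchy--Schwarz argument works is $J=\int fh\,dx$. Indeed $J'\int fh=(\int fh)^2-\int\varphi h\det b\,\int f^2\phi h(\det b)^{-1}\le 0$, because $\sqrt{\varphi h\det b}\,\sqrt{f^2\phi h(\det b)^{-1}}=fh$. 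Your candidates $\int f^{-1}h$ and $\int f^{-1}\log h$ give no sign.
\item The limit satisfies $(2\pi)^{-\frac s2}e^{-\frac12\sum_{i\le s}|X\cdot\mathbf e_i|^2}\det b=\lambda_\infty f$, so $\tau=\lambda_\infty$, not $1/\lambda_\infty$. Multiplying by $h$ and integrating forces $\tau=\frac{\int e^{-\frac12\rho^2\sum_{i\le s}|u\cdot\mathbf e_i|^2}\rho^n du}{(2\pi)^{s/2}\int fh}$.
\item The check you announce against the printed normalization, which uses $\int f^{-1}h$, cannot succeed. As printed, the equation with $\tau f$ and that formula for $\tau$ are incompatible; take $f\equiv 2$. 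What the paper's flow actually produces is the equation with right-hand side $\tau f^{-1}$, since its stationary condition is $fe^{-\cdots}\det b=(2\pi)^{\frac s2}\tau$. That version is consistent with the printed $\tau$. You get exactly this by running your flow with $1/f$ in place of $f$.
\item Minor: the polar formula for $V_{\mathrm{mix}}$ has no factor $\frac1n$.
\end{enumerate}
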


The flow method transforms a static geometric or PDE problem directly into a dynamic evolutionary process. By studying the long-term behavior
(existence, convergence, asymptotics) of this evolutionary process (geometric flow), one can prove
the existence, uniqueness, and other properties of the original static solution. The method can be traced
back to 1974, when Firey \cite{F74} proposed a model: the rate of change of the shape of a convex body is determined
by its surface area function. This can be regarded as an early application of ``curvature
flow” in geometry, although Firey primarily focused on the model itself rather than solving the
Minkowski problem. Later in 1985, Tso \cite{T85} introduced a weak formulation of the Gaussian curvature
flow. Although his primary focus was on the elliptic Monge-Amp\`{e}re equation, his methods provided
important insights for subsequent research. Several years later, Chou-Wang \cite{CW00} developed the theory
of the inverse Gaussian curvature flow, proving that convex surfaces maintain their convexity and
expand into spheres under this flow. This work laid a solid foundation for using flow methods to
resolve Minkowski-type problems related to Gaussian curvature functions. In 2006, Chou-Wang \cite{CW06} pioneered the use of flow methods to address various Minkowski problems. Since then, the flow method has been widely applied by mathematical researchers, see \cite{DL25, L22, SW25, CT25, BG25, LXZ22, CL21, BIS21, LL20, HH25} and so on.

Consequently, we show the existence of normalized solutions to mixed Euclidean-Gaussian Minkowski problem.

\begin{theorem}\label{thm98}
 Suppose $\mu$ is a finite Borel measure not concentrated in any closed hemisphere. Then there exists a convex body $K\in\mathcal{K}_e^n$ and a constant $\tau $ such that
\begin{equation*}
S_{V_{\mathrm{mix}},K}=\tau\mu,
\end{equation*}
where the normalization constant is 
\begin{equation*}
\tau=\frac{\int_{\mathbb{S}^{n-1}}e^{-\frac{1}{2}\rho^2\sum_{i=1}^s|u\cdot {\bf e}_i|^2}\rho^n du}{(2\pi)^{\frac{s}{2}}\int_{\mathbb{S}^{n-1}}f^{-1}hdu}.
\end{equation*}
\end{theorem}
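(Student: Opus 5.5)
We prove Theorem \ref{thm98} by approximating $\mu$ with smooth even data, applying Theorem \ref{thm2}, and passing to the limit. Since $K\in\mathcal{K}_e^n$ is required, we take $\mu$ even, as the symmetric setting implicitly demands. In the normalization constant, $f^{-1}h\,du$ is read as the weak form $\int h\,d\mu$.

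\textbf{Approximation.} Choose positive even functions $f_j\in C^{2,\alpha}(\mathbb{S}^{n-1})$ with $f_j\,du\rightharpoonup\mu$ weakly. For instance, convolve $\mu$ with an even smooth approximate identity on $SO(n)$ and add $1/j$. Theorem \ref{thm2} gives origin-symmetric $C^{4,\alpha}$ bodies $K_j$ and constants $\tau_j$ with $S_{V_{\mathrm{mix}},K_j}=\tau_j f_j\,du$. Here $\tau_j$ is the ratio of $\int e^{-\frac12\rho_{K_j}^2\sum|u\cdot\mathbf{e}_i|^2}\rho_{K_j}^n\,du/(2\pi)^{s/2}$, which is $nV_{\mathrm{mix}}$-like, to $\int h_{K_j}f_j\,du$.

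\textbf{Uniform bounds on $K_j$.} I would reuse the a priori estimates from the flow proof of Theorem \ref{thm2}, presumably a monotone functional such as $\int\log h\,d\mu$ or $V_{\mathrm{mix}}$ fixed along the flow. These give bounds $0<c\le h_{K_j}\le C$ depending only on quantities stable under weak convergence. The key input for the upper bound is the standard even argument. If $R_j=\max h_{K_j}=h_{K_j}(v_j)$, then $h_{K_j}(u)\ge R_j|u\cdot v_j|$ by symmetry. The condition that $\mu$ is not concentrated on a great subsphere gives $\int|u\cdot v|\,d\mu\ge c_0>0$ uniformly in $v$, and this transfers to $f_j$ for large $j$. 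Combined with control of the functional, this bounds $R_j$. The lower bound requires that $K_j$ does not degenerate to a lower-dimensional set. I would argue that if the width in direction $w$ tends to $0$, then $S_{V_{\mathrm{mix}},K_j}$ concentrates near $\pm w$, whereas $\tau_j f_j\,du$ does not, provided $\tau_j$ stays bounded above and below. So the bounds on $\tau_j$ and on $K_j$ have to be proved together.

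\textbf{Passage to the limit.} By Blaschke selection, $K_j\to K\in\mathcal{K}_e^n$ with $o\in\operatorname{int}K$. Next I would prove weak continuity of $S_{V_{\mathrm{mix}},\cdot}$ under Hausdorff convergence of bodies containing the origin. This follows by writing $S_{V_{\mathrm{mix}},K}$ as the pushforward under the Gauss map of $(2\pi)^{-s/2}e^{-|x|^2/2}\,d\mathcal{H}^{n-1}|_{\partial K}$, or via the variational formula of Theorem \ref{thm5} together with continuity of $V_{\mathrm{mix}}$, in analogy with the Gaussian case \cite{HXZ21}. The radial functions and support functions converge uniformly, so $\tau_j\to\tau$ with the stated formula. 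Hence $S_{V_{\mathrm{mix}},K}=\tau\mu$.

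\textbf{Main obstacle.} The hardest step is the uniform nondegeneracy in the second step, that is, the lower bound on $h_{K_j}$ together with two-sided bounds on $\tau_j$. The density is not homogeneous, and it decays only in the $x$-directions, so a body can escape to infinity along $\mathbb{R}^s$ while $V_{\mathrm{mix}}$ stays bounded. I would first try to exploit the normalization built into the flow, which fixes a functional whose level sets are bounded. Failing that, I would rule out the collapse cases by contradiction using the spherical Lebesgue bounds described above.
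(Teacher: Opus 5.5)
Your outline (even smooth approximation, Theorem \ref{thm2}, uniform bounds, Blaschke selection, weak continuity as in Theorem \ref{thm3.3}) is the paper's outline. However, the step that carries all the content, the uniform two-sided bound on $h_{K_j}$, is never actually supplied.

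You only guess at the relevant functional (``presumably $\int\log h\,d\mu$ or $V_{\mathrm{mix}}$ fixed along the flow''). Neither guess is right: $\int\log h\,d\mu$ plays no role, and $V_{\mathrm{mix}}$ is monotone rather than fixed. Your lower-bound argument also does not close. Since $S_{V_{\mathrm{mix}},K_j}=\tau_j\mu_j$ exactly, a collapse of $K_j$ to a set of dimension at most $n-2$ (say a segment) makes both sides tend to zero. Comparing weak limits then gives no contradiction, and the proviso that $\tau_j$ stays bounded below is precisely what is at stake.

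More fundamentally, bounds ``depending only on quantities stable under weak convergence'' cannot hold for arbitrary solutions furnished by the statement of Theorem \ref{thm2}. The equation $S_{V_{\mathrm{mix}},K}=\tau\mu$ with a free constant $\tau$ has no built-in normalization. Already for $s=0$, every dilate of a solution is again a solution, with a different $\tau$. So such a sequence $K_j$ need be neither bounded nor nondegenerate.

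The missing idea is to take the specific solutions produced by the flow started from the unit sphere ($h_0\equiv 1$). Two facts then pass to the limit $t_k\to\infty$ by Lemma \ref{lem4.6}, both uniformly in $j$:
\begin{enumerate}
\item By Lemma \ref{lem 6} the functional $G$ is \emph{conserved}, so $\frac{1}{|\mu_j|}\int h_{K_j}\,d\mu_j=1$. Here $\mu_j$ is the measure on the right-hand side of the limiting equation, i.e.\ the measure $f^{-1}dx$ of \eqref{E:0003}.
\item By Lemma \ref{lem10-1}, $V_{\mathrm{mix}}$ is \emph{nondecreasing} (not fixed), so $V_{\mathrm{mix}}(K_j)\ge V_{\mathrm{mix}}(B)>0$.
\end{enumerate}

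The first fact, combined with your estimate $h_{K_j}\ge R_j\langle\cdot,v_j\rangle_+$, gives $R_j\le 1/M_j$, where $M_j=\frac{1}{|\mu_j|}\min_{v}\int\langle v,u\rangle_+\,d\mu_j(u)$. Weak convergence and the non-concentration of $\mu$ give $M_j\ge C_0>0$ for large $j$, so $K_j\subset B_R$.

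The second fact gives the lower bound without any information on $\tau_j$. Suppose $\min h_{K_j}=h_{K_j}(u_j)\to 0$. Origin symmetry puts $K_j$ inside $B_R\cap\{|y\cdot u_j|\le h_{K_j}(u_j)\}$, whose $V_{\mathrm{mix}}$ is $O(R^{n-1}h_{K_j}(u_j))\to 0$. This contradicts $V_{\mathrm{mix}}(K_j)\ge V_{\mathrm{mix}}(B)$.

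Two-sided bounds on $\tau_j$ then follow from the body bounds, since its denominator is exactly $(2\pi)^{s/2}|\mu_j|$. From there your Blaschke selection and weak-continuity step go through as in the paper.
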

Our proof proceeds by first proving Theorem \ref{thm2} via the flow method, and then approximating by smooth positive densities to obtain Theorem \ref{thm98}. This results in an independent proof of the existence of solutions and additionally gives information about the regularity. This is a new approach for this class of measures, and one can find further generalizations of this measure and their corresponding Minkowski problems in \cite{KL2023}.

The subsequent sections of this article are organized as follows. In section \ref{sec2}, we present various essential preliminaries and the framework for the flow method. In section \ref{sec3}, we calculate the variational formula of the mixed Euclidean-Gaussian volume. We demonstrate the a-priori estimates of solutions for the flow equation in Section 4. It is important to clarify that repeated indices in the main text signify summation. To simplify computations, the summation symbol will not be used henceforth. The constants in different lines may differ but will just denoted as $C$ for simplification.

\section{Preliminaries}
\label{sec2}
In this section, we will provide notation, a brief introduction to the theory of convex bodies, and a framework to the theory of geometric flows. One can refer to the book by Schneider \cite{S93} for more details on convex bodies.

\subsection{Convex body}
Let $\mathbb{R}^n$ be the $n$-dimensional Euclidean space. The unit sphere in $\mathbb{R}^n$ is defined as $\mathbb{S}^{n-1}$. $C(\mathbb{S}^{n-1})$ represents the space of continuous functions on $\mathbb{S}^{n-1}$ and will always be equipped with the max-norm metric
\begin{equation*}
    \|f-g\|_{\infty}=\max_{x\in\mathbb{S}^{n-1}}|f(x)-g(x)|,
\end{equation*}
for $f,g\in C(\mathbb{S}^{n-1})$. The subspace of positive continuous functions from $C(\mathbb{S}^{n-1})$ will be denoted as $C^+(\mathbb{S}^{n-1})$.

A convex body in $\mathbb{R}^n$ is a compact convex set with nonempty interior. Denote the class of convex bodies that contain the origin in their interiors in $\mathbb{R}^n$ and the class of origin-symmetric convex bodies in $\mathbb{R}^n$ by $\mathcal{K}_0^n$ and $\mathcal{K}_e^n$ respectively.

Let $K$ be a compact convex subset in $\mathbb{R}^n$. Then its support function $h_K$ is defined by
\begin{equation}
    h_K(x)=\max\{x\cdot y: y\in K\},~~~~x\in\mathbb{R}^n.
\end{equation}
The support function $h=h_K: \mathbb{R}^n\rightarrow\mathbb{R}$ is a continuous function and is homogeneous of degree 1. Suppose $K$ contains the origin in its interior, then the radial function $\rho=\rho_K: \mathbb{R}^n\backslash\{0\}\rightarrow\mathbb{R}$ is defined by
\begin{equation}
    \rho_K(x)=\max\{\lambda: \lambda x\in K\},~~~~x\in \mathbb{R}^n\backslash\{0\}.
\end{equation}
The radial function $\rho_K$ is a continuous function homogeneous of degree $-1$. 


For each $f\in C^+(\mathbb{S}^{n-1})$, the Wulff shape $[f]$ generated by $f$ is the convex body defined by
\begin{equation*}
    [f]=\{x\in\mathbb{R}^n: x\cdot u\leq f(u), \text{for all}~~ u\in \mathbb{S}^{n-1}\}.
\end{equation*}
It is obvious that $h_{[f]}\leq f$ and $[h_K]=K$ for each $K\in\mathcal{K}_0^n$.

Let $K_i\in\mathcal{K}_0^n$ be a sequence of convex bodies in $\mathbb{R}^n$. We say that $K_i$ converges to a compact convex subset $K\subset\mathbb{R}^n$ with respect to the Hausdorff metric provided that when $i\rightarrow\infty$,
\begin{equation*}
    \|h_{K_i}(u)-h_K(u)\|_{\infty}=\max_{u\in\mathbb{S}^{n-1}}|h_{K_i}(u)-h_K(u)|\rightarrow 0.
\end{equation*}
If $K$ contains the origin in its interior, the above formula is equivalent to
\begin{equation*}
    \|\rho_{K_i}(u)-\rho_K(u)\|_{\infty}=\max_{u\in\mathbb{S}^{n-1}}|\rho_{K_i}(u)-\rho_K(u)|\rightarrow 0,
\end{equation*}
as $i\rightarrow\infty$.

We use $\nu_K$ to denote the Gauss map that takes $x\in \partial K$ to its unique outer unit normal, and due to the convexity of $K$, and the map $\nu_K$ is almost everywhere defined on $\partial K$. We use $\nu_K^{-1}$ to denote the inverse Gauss map. Since $K$ is not assumed to be strictly convex, the map $\nu_K^{-1}$ is a set-valued map, and for each set $\eta\subset\mathbb{S}^{n-1}$, we have
\begin{equation*}
    \nu_K^{-1}(\eta)=\{x\in\partial K:~\text{there exists}~\nu\in\eta~~ \text{such that}~~~\nu~~~ \text{is an outer unit normal at}~x\}.
\end{equation*}
Occasionally, we will shorten the notation for the Gauss and inverse Gauss maps of singletons as follows. For $u\in \mathbb{S}^{n-1}$, if $\nu_K$ is well-defined at $\rho_K(u)u\in\partial K$, then we write $\alpha_K(u)$ for $\nu_K(\rho_K(u)u)$. We will be using the following well-known relationships between the support and radial functions of convex bodies.

\begin{lemma}[\cite{CL21}]\label{RelationshipSR}
Let $K\in \mathcal{K}_0^n$. Let $h$ and $\rho$ be the support function and radial function of $K$ respectively. Let also $x_{\mathrm{max}}$ and $\xi_{\mathrm{min}}$ be two points such that $h_K(x_{\mathrm{max}})=\mathop{\max}_{\mathbb{S}^{n-1}} h$ and $\rho(\xi_{\mathrm{\min}})=\mathop{\min}_{\mathbb{S}^{n-1}} \rho$. Then
\begin{gather*}
\mathop{\max}_{\mathbb{S}^{n-1}} h_K=\mathop{\max}_{\mathbb{S}^{n-1}} \rho_K\quad {and}\quad \mathop{\min}_{\mathbb{S}^{n-1}} h_K=\mathop{\min}_{\mathbb{S}^{n-1}} \rho_K,\\
h_K(x)\geq x\cdot x_{\max} h_{K} (x_{\max}),\quad \forall x\in \mathbb{S}^{n-1},\\
\rho_K(\xi)\xi \cdot \xi_{\min}\leq \rho_{K} (\xi_{\min}),\quad \forall x\in \mathbb{S}^{n-1}.
\end{gather*}

\end{lemma}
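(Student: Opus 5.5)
The statement to prove is Lemma \ref{RelationshipSR}. The plan is to use two elementary inequalities valid for every $K\in\mathcal{K}_0^n$ and $u\in\mathbb{S}^{n-1}$. First, $\rho_K(u)u\in K$, so the definition of the support function gives $h_K(u)\ge u\cdot(\rho_K(u)u)=\rho_K(u)$. Second, $K\subset\{x: x\cdot v\le h_K(v)\}$ for every $v$; hence $\rho_K(\xi)\xi\cdot v\le h_K(v)$ for all $\xi,v\in\mathbb{S}^{n-1}$. Everything follows from these two facts together with compactness of $K$.

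\textbf{The two extremal equalities.} For the maxima, take $y\in K$ attaining $h_K(x_{\max})=x_{\max}\cdot y$. Then $h_K(x_{\max})\le |y|$. Since $y\in K$ and $o\in\mathrm{int}\,K$, the point $y$ lies on the segment from $o$ to $\rho_K(y/|y|)\,y/|y|$, so $|y|\le\rho_K(y/|y|)\le\max\rho_K$. Combined with $\rho_K\le h_K$ pointwise, this gives $\max h_K=\max\rho_K$. For the minima, $\rho_K\le h_K$ gives $\min\rho_K\le\min h_K$. Conversely, let $\xi_{\min}$ minimize $\rho_K$ and set $p=\rho_K(\xi_{\min})\xi_{\min}\in\partial K$. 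The ball $B$ of radius $\rho_K(\xi_{\min})$ centered at $o$ lies in $K$, and $p\in\partial B\cap\partial K$. Any supporting hyperplane of $K$ at $p$ must therefore also support $B$ at $p$, so its outer normal is $\xi_{\min}$. Consequently $h_K(\xi_{\min})=p\cdot\xi_{\min}=\rho_K(\xi_{\min})$, which yields $\min h_K\le\min\rho_K$.

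\textbf{The two inequalities.} By the argument for the maxima, the point $q=h_K(x_{\max})x_{\max}$ has $|q|=\max\rho_K$. The maximizer $y$ satisfies $h_K(x_{\max})=x_{\max}\cdot y\le|y|\le\max\rho_K=h_K(x_{\max})$, so equality holds in Cauchy--Schwarz and $y=q$. Thus $q\in K$, and $h_K(x)\ge x\cdot q=x\cdot x_{\max}\,h_K(x_{\max})$ for every $x\in\mathbb{S}^{n-1}$. For the last inequality, apply the second elementary fact with $v=\xi_{\min}$: $\rho_K(\xi)\xi\cdot\xi_{\min}\le h_K(\xi_{\min})=\rho_K(\xi_{\min})$, using the equality just proved at $\xi_{\min}$.

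\textbf{Main difficulty.} The only step needing care is identifying $\xi_{\min}$ as a normal direction at the closest boundary point, i.e.\ the inscribed-ball tangency argument. A supporting hyperplane of $K$ at $p$ contains $p$ and has $B$ on one side; since $p\in\partial B$, the only hyperplane through $p$ with $B$ on one side is the tangent plane with normal $\xi_{\min}$. Everything else is a direct consequence of the definitions.
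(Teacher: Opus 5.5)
The paper does not prove this lemma; it quotes it from \cite{CL21}, so there is no in-paper argument to compare yours against. Your proof is correct and complete.

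\begin{itemize}
\item The pointwise bound $\rho_K\le h_K$ is sound.
\item Realizing $h_K(x_{\max})$ by a point $y\in K$ with $|y|\le\rho_K(y/|y|)$ works. Note that $y\neq o$ because $x_{\max}\cdot y=h_K(x_{\max})>0$; this is worth one line.
\item The equality case of Cauchy--Schwarz correctly forces $y=h_K(x_{\max})x_{\max}\in K$.
\item The inscribed-ball tangency argument correctly gives $h_K(\xi_{\min})=\rho_K(\xi_{\min})$.
\item You rightly read the final quantifier in the statement as ranging over $\xi$; the statement's ``$\forall x$'' there is a typo.
\end{itemize}

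If you want to avoid the tangency step, you can use polarity instead: $\rho_K=1/h_{K^\circ}$ and $h_K=1/\rho_{K^\circ}$, a relation the paper itself uses in Section 4. The minimum identity for $K$ is then exactly the maximum identity for $K^\circ$. The third inequality is the second one applied to $K^\circ$, whose support function is maximized at $\xi_{\min}$: multiplying $1/\rho_K(\xi)\ge \xi\cdot\xi_{\min}/\rho_K(\xi_{\min})$ by $\rho_K(\xi)\rho_K(\xi_{\min})>0$ gives the claim. With this route the whole lemma reduces to the two statements about maxima together with their polar duals.
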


\subsection{Geometric flow}
Let $M$ be a smooth, closed, uniformly convex hypersurface in $\mathbb{R}^n$ enclosing the origin in its interior. $M$ is parametrized by the inverse spherical image $X=\nu_M^{-1}: \mathbb{S}^{n-1}\rightarrow M$. The support function $h$ of $M$ can be computed by
\begin{equation}\label{E:98}
    h(x)=\langle x,X(x)\rangle,
\end{equation}
where $x\in\mathbb{S}^{n-1}$ is the unit outer normal of $M$ at $X(x)$. Additionally, it is straightforward to verify that
\begin{equation*}
X(x)=h(x)x+\nabla h(x),
\end{equation*}
where $\nabla $ is the covariant derivative with respect to the standard metric $e_{ij}$ of $\mathbb{S}^{n-1}$. The second fundamental form of $M$ is given by
\[
\Pi_{ij}=\nabla_{ij} h+he_{ij},
\]
where $\nabla_{ij}=\nabla^2$ denotes the second-order covariant derivative with respect to $e_{ij}$. Based on a smooth local orthonormal frame on $\mathbb{S}^{n-1}$, the principal radii of curvature of $M$ are the eigenvalues of the matrix
\[
b_{ij}=\nabla_{ij} h +h\delta_{ij}.
\]
Let $u,x\in \mathbb{S}^{n-1}$ satisfy
\[
\rho(u)u=X(x)=h(x)x+\nabla h(x).
\]
It is well-known that
\[
\rho^2=h^2+|\nabla h|^2,\quad h=\frac{\rho^2}{\sqrt{\rho^2+|\nabla \rho|^2}}.
\]

Assume that $0<f\in C^{\infty}(\mathbb{S}^{n-1})$. Let $X_0:\mathbb{S}^{n-1}\rightarrow \mathbb{R}^n$ be a parametrization of a smooth, closed, and uniformly convex hypersurface $\mathcal{M}_0$ with the origin in its interior. In this paper, we will investigate the long-time behavior of the family of convex hypersurfaces $\mathcal{M}_t$ parameterized by the smooth map $X(\cdot,t):\mathbb{S}^{n-1}\times[0,T)\rightarrow\mathbb{R}^n$ that satisfies the initial boundary equation
\begin{equation}\label{E:97}
\begin{cases}
        \partial_t X(x,t)=f(\nu)e^{-\frac{1}{2}\sum_{i=1}^s|(X\cdot \bf{e}_i)|^2}\det(h_{ij}+h\delta_{ij})\langle X,\nu\rangle \nu-(2\pi)^{\frac{s}{2}}X(x,t)\tau(t)\\
        X(x,0)=X_0(x).
\end{cases}
\end{equation}
Here, $\nu$ is the unit outer normal vector of the hypersurface $\mathcal{M}_t$ at the point $X(\cdot,t)$, $h(x,t)$ is the support function of $\mathcal{M}_t$, $T$ is the maximal time for which the solution exists, and
\begin{equation}\label{Tau-function}
    \tau(t)=\frac{\int_{\mathbb{S}^{n-1}}e^{-\frac{1}{2}\sum_{i=1}^s|X\cdot {\bf e}_i|^2}|X|^n dx}{(2\pi)^{\frac{s}{2}}\int_{\mathbb{S}^{n-1}}f^{-1} \langle X,x\rangle dx}.
\end{equation}
It is worth noticing that the flow \eqref{E:97} can be reduced to a scalar PDE of the support function $h$ of $\mathcal{M}_t$ as follows
\begin{equation}\label{E:0002}
\begin{cases}
        \partial_t h(x,t)=f(x)h(x,t)e^{-\frac{1}{2}\sum_{i=1}^s|X\cdot \bf{e}_i|^2}\det(h_{ij}+h\delta_{ij})-(2\pi)^{\frac{s}{2}}h(x,t)\tau(t),\\
        h(x,0)=h_0(x),
\end{cases}
\end{equation}
where $h_0$ is the support function of $\mathcal{M}_0$.

Note that for $u\in \mathbb{S}^{n-1}$, $\rho(u,t)$ is the radial function of the convex body enclosed by $\mathcal{M}_t$ with
\begin{equation}\label{2.2-1}
    \rho(u,t)=(h(x,t)^2+|\nabla h(x,t)|^2)^{\frac{1}{2}}.
\end{equation}
Moreover, if we substitute the identity
\begin{equation*}
    \frac{1}{\rho(u,t)}\frac{\partial\rho(u,t)}{\partial t}=\frac{1}{h(x,t)}\frac{\partial h(x,t)}{\partial t}
\end{equation*}
into \eqref{E:0002}, it follows that the evolution equation of the radial function $\rho(u,t)$ is given by
\begin{equation}\label{E:0005}
    \begin{cases}
\partial_t \rho(u,t)=f(x)\rho(u,t) e^{-\frac{1}{2}\rho(u,t)^2\sum_{i=1}^s|u\cdot {\bf e}_i|^2}\det(h_{ij}+h\delta_{ij})-(2\pi)^{\frac{s}{2}}\rho(u,t)\tau(t),\\
        \rho(u,0)=\rho_0,
    \end{cases}
\end{equation}
where $x=x(u,t)$ is the unit outer normal vector of $M_t$ at the point $\rho(u,t)u$ and $\rho_0$ is the radial function of $\mathcal{M}_0$.

Let $\mathcal{M}_t$ be the convex hypersurface with the support function $h: \mathbb{S}^{n-1}\times[0,T)\rightarrow\mathbb{R}$. We define
\begin{equation}\label{E:0003}
    G(t)=\frac{1}{|\mu|}\int_{\mathbb{S}^{n-1}}h(x,t)d\mu(x),
\end{equation}
where $d\mu=f^{-1}(x)dx$ and $|\mu|=\mu(\mathbb{S}^{n-1})$ for a Borel measure $\mu$ on $\mathbb{S}^{n-1}$. In the following, we prove that along the flow \eqref{E:0002}, $G(t)$ in \eqref{E:0003} stays constant.

\begin{lemma}\label{lem 6}
    Along the flow \eqref{E:0002}, we have $G(t)=G(0)$, $\forall t\in [0,T)$.
\end{lemma}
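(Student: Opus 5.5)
Differentiating $G(t)$ in time and substituting the evolution equation \eqref{E:0002} gives
\[
|\mu|\,G'(t)=\int_{\mathbb{S}^{n-1}}\partial_t h\, f^{-1}\,dx
=\int_{\mathbb{S}^{n-1}} h\, e^{-\frac{1}{2}\sum_{i=1}^s|X\cdot{\bf e}_i|^2}\det(h_{ij}+h\delta_{ij})\,dx-(2\pi)^{\frac{s}{2}}\tau(t)\int_{\mathbb{S}^{n-1}} f^{-1}h\,dx .
\]
By the definition \eqref{Tau-function} of $\tau(t)$, and since $\langle X,x\rangle=h(x,t)$ by \eqref{E:98}, the second term equals $\int_{\mathbb{S}^{n-1}}e^{-\frac12\sum_{i=1}^s|X\cdot{\bf e}_i|^2}|X|^n\,dx$. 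So the whole claim reduces to the identity
\[
\int_{\mathbb{S}^{n-1}} h(x)\, e^{-\frac{1}{2}\sum_{i=1}^s|X(x)\cdot{\bf e}_i|^2}\det(h_{ij}+h\delta_{ij})(x)\,dx=\int_{\mathbb{S}^{n-1}}e^{-\frac12\rho(u)^2\sum_{i=1}^s|u\cdot{\bf e}_i|^2}\rho(u)^n\,du ,
\]
where the right-hand side is read in the radial parametrization. This is the form appearing in the statement of Theorem \ref{thm2}, and it is what makes the normalization consistent.

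To prove the identity, I would interpret both sides as $n$ times the integral $\int_{K_t} e^{-\frac12|x'|^2}\,dz$, with $x'$ the $\mathbb{R}^s$-component of $z$, and then pass through two standard changes of variables. First, by polar coordinates,
\[
\int_{K_t} g(z)\,dz=\int_{\mathbb{S}^{n-1}}\int_0^{\rho(u)} g(ru)\,r^{n-1}\,dr\,du .
\]
Second, the cone-volume decomposition uses the Gauss map parametrization $X=\nu^{-1}$, whose Jacobian from $\mathbb{S}^{n-1}$ to $\partial K_t$ is $\det(h_{ij}+h\delta_{ij})$. Together with the known relation $h(x)\det(h_{ij}+h\delta_{ij})\,dx=\rho(u)^n\,du$ between the cone-volume element at corresponding points $\rho(u)u=X(x)$, this gives the pointwise change of variables $x\mapsto u$. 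The density factor depends only on the point $X(x)=\rho(u)u$, so it transfers directly. Explicitly, $\sum_{i=1}^s|X\cdot{\bf e}_i|^2=\rho^2\sum_{i=1}^s|u\cdot{\bf e}_i|^2$.

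Under this change of variables the left-hand side becomes exactly the right-hand side, and hence $G'(t)=0$. Consequently $G(t)=G(0)$ for all $t\in[0,T)$.

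The main obstacle is just justifying the change of variables $h\det(\nabla^2h+hI)\,dx=\rho^n\,du$ under the radial map $x\mapsto u=X(x)/|X(x)|$. I would verify it by noting that both sides equal $n$ times the volume of the infinitesimal cone from the origin over the same boundary patch. The map is a diffeomorphism for smooth, uniformly convex $\mathcal{M}_t$ with the origin in its interior, which holds along the flow for $t<T$.
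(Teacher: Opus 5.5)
Your argument is the paper's own: differentiate $G$, substitute \eqref{E:0002}, rewrite the first term via $h\det(h_{ij}+h\delta_{ij})\,dx=\rho^n\,du$, and cancel it against the $\tau$-term. In that cancellation you read the numerator of $\tau$ as the radial integral $\int_{\mathbb{S}^{n-1}}e^{-\frac12\rho^2\sum_{i=1}^s|u\cdot\mathbf{e}_i|^2}\rho^n\,du$ of Theorem \ref{thm2}, exactly as the paper tacitly does; this reading is necessary, because the literal $dx$-integral in \eqref{Tau-function} would not cancel in general.

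One aside is false and should be dropped: neither side equals $n\int_{K_t}e^{-\frac12|x'|^2}\,dz$. The Gaussian factor is evaluated only at the boundary point, and $n\int_0^{\rho}e^{-ar^2/2}r^{n-1}\,dr>e^{-a\rho^2/2}\rho^n$ for $a>0$. The pointwise cone-volume change of variables you give afterwards is all that is needed.
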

\begin{proof}
Note that
\begin{equation}\label{E:0004}
h\det(h_{ij}+h\delta_{ij})dx=\rho(u)^ndu.
\end{equation}
Combining the flow \eqref{E:0002}, the definition \eqref{Tau-function} of $\tau(t)$, and  \eqref{E:0004}, we have
  \begin{align*}
      \frac{d}{dt}G(t)=&\frac{1}{|\mu|}\int_{\mathbb{S}^{n-1}}f(x)^{-1}\partial_{t}h(x,t)dx\\
      =&\frac{1}{|\mu|}\int_{\mathbb{S}^{n-1}}\left(he^{-\frac{1}{2}\sum_{i=1}^s|X\cdot {\bf e}_i|^2}\det(h_{ij}+h\delta_{ij})-(2\pi)^{\frac{s}{2}}h\tau(t)f(x)^{-1}\right)dx\\
      =&\frac{1}{|\mu|}\left[\int_{\mathbb{S}^{n-1}}he^{-\frac{1}{2}\sum_{i=1}^s|X\cdot {\bf e}_i|^2}\det(h_{ij}+h\delta_{ij})dx-(2\pi)^{\frac{s}{2}}\int_{\mathbb{S}^{n-1}}h\cdot\frac{\int_{\mathbb{S}^{n-1}}e^{-\frac{1}{2}\sum_{i=1}^s|X\cdot {\bf e}_i|^2}|X|^n dx}{(2\pi)^{\frac{s}{2}} f(x)\int_{\mathbb{S}^{n-1}}f^{-1}\langle X,x\rangle dx}dx\right]\\
      =&\frac{1}{|\mu|}\left[\int_{\mathbb{S}^{n-1}}e^{-\frac{1}{2}|\rho(u)|^2\sum_{i=1}^s|u\cdot {\bf e}_i|^2}\rho(u)^n du-\int_{\mathbb{S}^{n-1}}e^{-\frac{1}{2}\rho(u)^2\sum_{i=1}^s|u\cdot {\bf e}_i|^2}\rho(u)^n du\right]\\
      =&0.
  \end{align*}
\end{proof}

Let $K_t$ be the convex body enclosed by $\mathcal{M}_t$, then we show that the mixed Euclidean-Gaussian volume of this set is non-decreasing along the flow \eqref{E:0002}. 

\begin{lemma}\label{lem10-1}
    The function $V_{\mathrm{mix}}(K_t)$ is non-decreasing along the flow \eqref{E:0002}, that is
    \begin{equation*}
        \frac{d}{dt}V_{\mathrm{mix}}(K_t)\geq0,
    \end{equation*}
    and the equality holds if and only if $K_t$ satisfies
    \begin{equation*}
       f e^{-\frac{1}{2}\rho^2\sum_{i=1}^s|(u\cdot {\bf e}_i)|^2} \det(h_{ij}+h\delta_{ij})=(2\pi)^{\frac{s}{2}}\tau(t).
    \end{equation*}
\end{lemma}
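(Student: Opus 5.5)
We write $V_{\mathrm{mix}}(K_t)$ in polar coordinates and differentiate under the integral sign, using the radial evolution equation \eqref{E:0005}. Then we change variables to the normal sphere and apply Cauchy--Schwarz, with the conservation law of Lemma \ref{lem 6} tying the pieces together.

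\textbf{Polar formula and differentiation.} In polar coordinates,
\[
V_{\mathrm{mix}}(K_t)=\frac{1}{(2\pi)^{\frac{s}{2}}}\int_{\mathbb{S}^{n-1}}\int_0^{\rho(u,t)}e^{-\frac{1}{2}r^2\sum_{i=1}^s|u\cdot{\bf e}_i|^2}r^{n-1}\,dr\,du .
\]
Differentiating in $t$ gives
\[
\frac{d}{dt}V_{\mathrm{mix}}(K_t)=\frac{1}{(2\pi)^{\frac{s}{2}}}\int_{\mathbb{S}^{n-1}}E\,\rho^{n-1}\partial_t\rho\,du,
\qquad E:=e^{-\frac{1}{2}\rho^2\sum_{i=1}^s|u\cdot{\bf e}_i|^2}.
\]
Substituting \eqref{E:0005} yields
\[
\frac{d}{dt}V_{\mathrm{mix}}(K_t)=\frac{1}{(2\pi)^{\frac{s}{2}}}\int_{\mathbb{S}^{n-1}}E\rho^n\bigl(f\,E\det(h_{ij}+h\delta_{ij})-(2\pi)^{\frac{s}{2}}\tau\bigr)\,du .
\]
Set $A:=fE\det(h_{ij}+h\delta_{ij})$, viewed as a function of $u$ through $x=x(u,t)$. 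By the definition \eqref{Tau-function}, $(2\pi)^{\frac{s}{2}}\tau\int f^{-1}h\,dx=\int E\rho^n\,du$. So it suffices to show
\[
\int_{\mathbb{S}^{n-1}} E\rho^n A\,du\cdot\int_{\mathbb{S}^{n-1}} f^{-1}h\,dx\;\ge\;\Bigl(\int_{\mathbb{S}^{n-1}} E\rho^n\,du\Bigr)^2 .
\]

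\textbf{Change of variables and Cauchy--Schwarz.} Using \eqref{E:0004}, $\rho^n du=h\det(h_{ij}+h\delta_{ij})\,dx$, we rewrite $\int f^{-1}h\,dx=\int \rho^n\,(f\det(h_{ij}+h\delta_{ij}))^{-1}du=\int E\rho^n A^{-1}\,du$. With the positive weight $d\sigma=E\rho^n\,du$, the required inequality becomes
\[
\int A\,d\sigma\int A^{-1}\,d\sigma\ge\Bigl(\int d\sigma\Bigr)^2 .
\]
This is Cauchy--Schwarz applied to $\sqrt{A}$ and $1/\sqrt{A}$. Equality holds if and only if $A$ is constant $\sigma$-a.e. (hence everywhere, by continuity). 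Inserting a constant $A$ back into the identity for $\tau$ forces $A=(2\pi)^{\frac{s}{2}}\tau(t)$, which is exactly the stated equality condition. Conversely, if $A\equiv(2\pi)^{\frac{s}{2}}\tau$, the integrand of $\frac{d}{dt}V_{\mathrm{mix}}$ vanishes identically.

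\textbf{Main obstacle.} The only delicate point is the bookkeeping of the two variables $u$ and $x$. We must check that the exponent $\sum|X\cdot{\bf e}_i|^2$ equals $\rho^2\sum|u\cdot{\bf e}_i|^2$ under the correspondence $X=\rho u$, and that \eqref{E:0004} is the correct Jacobian relation between $dx$ and $du$. Smoothness and uniform convexity of $\mathcal{M}_t$ make the radial-to-normal map a diffeomorphism, which justifies this change of variables.
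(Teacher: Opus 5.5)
Your proof is correct and is the paper's argument: polar coordinates, substituting \eqref{E:0005}, rewriting $\int f^{-1}h\,dx$ as a $du$-integral via \eqref{E:0004}, and then Cauchy--Schwarz. Your bookkeeping is in fact cleaner than the paper's. You keep both exponential factors in the first term ($E\rho^n A$ with $A=fE\det(h_{ij}+h\delta_{ij})$), whereas the paper's displayed computation drops one, so its Cauchy--Schwarz step as written yields $\bigl(\int E^{1/2}\rho^n\,du\bigr)^2$ rather than the needed square. You also actually prove the equality characterization, which the paper only asserts.
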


\begin{proof}
Converting the definition of the mixed Euclidean-Gaussian volume to polar coordinates, we have
\begin{equation}\label{E:0001}
    V_{\text{mix}}(t) :=  V_{\text{mix}}(K_t)=(2\pi)^{-\frac{s}{2}}\int_{\mathbb{S}^{n-1}}\int_0^{\rho_{[h_t]}(u)}e^{-\frac{1}{2}r^2\sum_{i=1}^s|(u\cdot {\bf e}_i)|^2} r^{n-1}drdu.
\end{equation}
By \eqref{E:0005}, we deduce that
\begin{align*}
    (2\pi)^{\frac{s}{2}}\frac{d}{dt}V_{\text{mix}}(K_t)=&\int_{\mathbb{S}^{n-1}}e^{-\frac{1}{2}\rho^2\sum_{i=1}^s|u\cdot {\bf e}_i|^2}\rho^{n-1}\partial_t \rho du\\
    =&\int_{\mathbb{S}^{n-1}}e^{-\frac{1}{2}\rho^2\sum_{i=1}^s|u\cdot {\bf e}_i|^2}\rho^{n-1}\left[f\rho e^{-\frac{1}{2}\rho^2\sum_{i=1}^s|u\cdot {\bf e}_i|^2} \det(h_{ij}+h\delta_{ij})-(2\pi)^{\frac{s}{2}}\rho \tau(t)\right]du\\
    =&\int_{\mathbb{S}^{n-1}}fe^{-\frac{1}{2}\rho^2\sum_{i=1}^s|u\cdot {\bf e}_i|^2}\rho^n \det(h_{ij}+h\delta_{ij})du-(2\pi)^{\frac{s}{2}}\int_{\mathbb{S}^{n-1}}e^{-\frac{1}{2}\rho^2\sum_{i=1}^s|u\cdot {\bf e}_i|^2}\rho^n \tau(t)du\\
    =&\int_{\mathbb{S}^{n-1}}fe^{-\frac{1}{2}\rho^2\sum_{i=1}^s|u\cdot {\bf e}_i|^2}\rho^n \det(h_{ij}+h\delta_{ij})du\\
    &-\int_{\mathbb{S}^{n-1}}e^{-\frac{1}{2}\rho^2\sum_{i=1}^s|u\cdot {\bf e}_i|^2}\rho^n \frac{\int_{\mathbb{S}^{n-1}}e^{-\frac{1}{2}\rho^2\sum_{i=1}^s|u\cdot {\bf e}_i|^2}\rho^n du}{\int_{\mathbb{S}^{n-1}}f^{-1}hdx}du.
\end{align*}
By \eqref{E:0004} and the H\"{o}lder inequality, we get
\begin{align*}
    &(2\pi)^{\frac{s}{2}}\left(\int_{\mathbb{S}^{n-1}}f^{-1}hdx\right)\frac{d}{dt}V_{\text{mix}}(K_t)\\
    =&\int_{\mathbb{S}^{n-1}}f^{-1}hdx \int_{\mathbb{S}^{n-1}}{f}e^{-\frac{1}{2}\rho^2\sum_{i=1}^s|u\cdot {\bf e}_i|^2}\rho^n \det(h_{ij}+h\delta_{ij})du-\left[\int_{\mathbb{S}^{n-1}}e^{-\frac{1}{2}\rho^2\sum_{i=1}^s|u\cdot {\bf e}_i|^2}\rho^n du\right]^2 \\
    =&\int_{\mathbb{S}^{n-1}}\frac{f^{-1}\rho^n}{\det(h_{ij}+h\delta_{ij})}du \int_{\mathbb{S}^{n-1}}{f}e^{-\frac{1}{2}\rho^2\sum_{i=1}^s|u\cdot {\bf e}_i|^2}\rho^n \det(h_{ij}+h\delta_{ij})d\nu-\left[\int_{\mathbb{S}^{n-1}}e^{-\frac{1}{2}\rho^2\sum_{i=1}^s|u\cdot {\bf e}_i|^2}\rho^n du\right]^2\\
    =&\int_{\mathbb{S}^{n-1}}\left(\sqrt{\frac{f^{-1}\rho^n}{\det(h_{ij}+h\delta_{ij})}}\right)^2du \int_{\mathbb{S}^{n-1}}\left(\sqrt{{f}e^{-\frac{1}{2}\rho^2\sum_{i=1}^s|u\cdot {\bf e}_i|^2}\rho^n \det(h_{ij}+h\delta_{ij})}\right)^2du\\
    &-\left[\int_{\mathbb{S}^{n-1}}e^{-\frac{1}{2}\rho^2\sum_{i=1}^s|u\cdot {\bf e}_i|^2}\rho^n du\right]^2\\
    \geq&\left[\int_{\mathbb{S}^{n-1}}e^{-\frac{1}{2}\rho^2\sum_{i=1}^s|u\cdot {\bf e}_i|^2}\rho^n du\right]^2-\left[\int_{\mathbb{S}^{n-1}}e^{-\frac{1}{2}\rho^2\sum_{i=1}^s|u\cdot {\bf e}_i|^2}\rho^n du\right]^2\\
    =& 0.
\end{align*}
\end{proof}

\begin{proposition}\label{prop1}
Let $K,~L\in\mathcal{K}^n_0$, if $K\subset L$, then $V_{\mathrm{mix}}(K)\leq V_{\mathrm{mix}}(L)$.
\end{proposition}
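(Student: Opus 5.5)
This is monotonicity of a measure with a nonnegative density. By \eqref{E:001}, $V_{\mathrm{mix}}(K)=\mu(K)$, where $\mu$ is the measure in \eqref{E:82}. Its density $(2\pi)^{-s/2}e^{-|x|^2/2}$ with respect to Lebesgue measure $dz$ is strictly positive and continuous on $\mathbb{R}^n$. So I would write
\[
V_{\mathrm{mix}}(L)-V_{\mathrm{mix}}(K)=\frac{1}{(2\pi)^{\frac{s}{2}}}\int_{L\setminus K}e^{-\frac{|x|^2}{2}}\,dz\ \geq 0 .
\]
This uses only that $K\subset L$ are both compact, hence Borel, so that $\mathbf{1}_K\le \mathbf{1}_L$ pointwise. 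Integrating the nonnegative density against this pointwise inequality gives the claim.

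As a consistency check with the polar-coordinate representation \eqref{E:0001}, which the paper prefers, I would also give the radial version. Since $K,L\in\mathcal{K}^n_0$ contain the origin in their interiors, their radial functions are defined. The inclusion $K\subset L$ gives $\rho_K(u)\le\rho_L(u)$ for every $u\in\mathbb{S}^{n-1}$, because $\lambda u\in K$ implies $\lambda u\in L$. Then for each fixed $u$, the map
\[
\rho\mapsto\int_0^{\rho}e^{-\frac12 r^2\sum_{i=1}^s|u\cdot\mathbf{e}_i|^2}r^{n-1}\,dr
\]
is non-decreasing, since its integrand is nonnegative. Integrating over $\mathbb{S}^{n-1}$ then yields $V_{\mathrm{mix}}(K)\le V_{\mathrm{mix}}(L)$.

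Neither argument has a genuine obstacle. The only point requiring care is that $K$ and $L$ are measurable and the integrals are finite. Both hold because convex bodies are compact, and the density is bounded by $(2\pi)^{-s/2}$ on bounded sets.
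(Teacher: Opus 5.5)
Your proposal is correct. Your second, radial argument is exactly the paper's proof. The paper uses $0<\rho_K\le\rho_L$ and writes
$V_{\mathrm{mix}}(L)-V_{\mathrm{mix}}(K)=(2\pi)^{-s/2}\int_{\mathbb{S}^{n-1}}\int_{\rho_K(u)}^{\rho_L(u)}e^{-\frac12 r^2\sum_{i=1}^s|u\cdot\mathbf{e}_i|^2}r^{n-1}\,dr\,du\ge 0$.

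Your primary argument, integrating the nonnegative density over $L\setminus K$, is the more direct route. It needs neither radial functions nor the hypothesis that the origin lies in the interiors, so it gives monotonicity for arbitrary Borel sets $K\subset L$. The paper's polar-coordinate form only matches the representation \eqref{E:0001} that it uses elsewhere, in Lemma \ref{lem10-1} and Theorem \ref{thm5}.
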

\begin{proof}
The condition $K\subset L$ implies that $0<\rho_K(u)\leq \rho_L(u)$ for all $u\in \mathbb{S}^{n-1}$. Hence we have
\begin{align*}
&V_{\text{mix}}(L)-V_{\text{mix}}(K)\\
=&\frac{1}{(2\pi)^{\frac{s}{2}}}\int_{\mathbb{S}^{n-1}}\int_0 ^{\rho_L(u)} e^{-\frac{1}{2}r^2 \sum_{i=1}^s |u\cdot {\bf e}_i|^2}r^{n-1}drdu-\frac{1}{(2\pi)^{\frac{s}{2}}}\int_{\mathbb{S}^{n-1}}\int_0 ^{\rho_K(u)} e^{-\frac{1}{2}r^2 \sum_{i=1}^s |u\cdot {\bf e}_i|^2}r^{n-1}drdu\\
=&\frac{1}{(2\pi)^{\frac{s}{2}}}\int_{\mathbb{S}^{n-1}}\int_{\rho_K(u)} ^{\rho_L(u)} e^{-\frac{1}{2}r^2 \sum_{i=1}^s |u\cdot {\bf e}_i|^2}r^{n-1}drdu\\
\geq&0.
\end{align*}
\end{proof}

\section{the mixed euclidean-gaussian measure }\label{sec3}
In this section, we introduce the mixed Euclidean-Gaussian surface area measure via a variational formula and the corresponding mixed Euclidean-Gaussian Minkowski problem, as well as establish several fundamental properties.

Before presenting the definition, we will first recall a relevant Lemma from \cite{HLYZ16}.
\begin{lemma}\label{lem3}
Let $K\in\mathcal{K}_0^n$ and $f\in C(\mathbb{S}^{n-1})$. Suppose $\delta>0$ is small enough so that for each $t\in(-\delta,\delta)$, we have
\begin{equation*}
h_t=h_K+tf>0.
\end{equation*}
Then
\begin{equation*}
\lim_{t\rightarrow0}\frac{\rho_{[h_t]}(u)-\rho_K(u)}{t}=\frac{f(\alpha_K(u))}{h_K(\alpha_K(u))}\rho_K(u)
\end{equation*}
for almost every $u\in\mathbb{S}^{n-1}$ with respect to the spherical Lebesgue measure. Moreover, there exists an $M>0$, such that
\begin{equation*}
|\rho_{[h_t]}(u)-\rho_K(u)|<M|t|
\end{equation*}
for all $u\in\mathbb{S}^{n-1}$ and $t\in(-\delta,\delta)$.
\end{lemma}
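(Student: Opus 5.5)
This lemma is quoted from \cite{HLYZ16}, but we would prove it directly, in two steps: first the Lipschitz bound, then the almost-everywhere derivative. For $t\in(-\delta,\delta)$, shrinking $\delta$ if necessary, there are constants $0<c\le h_t\le C$. Hence each Wulff shape $[h_t]$ contains the ball $cB$ and is contained in $CB$. We will use the formula
\[
\frac{1}{\rho_{[h_t]}(u)}=\max_{v\in\mathbb{S}^{n-1}}\frac{u\cdot v}{h_t(v)},
\]
which holds because $\rho_{[h_t]}(u)u\in[h_t]$ exactly when $\rho\,u\cdot v\le h_t(v)$ for all $v$. The same identity holds for $K$, since $[h_K]=K$.

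\textbf{Lipschitz bound.} The right-hand side is a maximum of functions of $t$. Each has $t$-derivative $-(u\cdot v)f(v)/h_t(v)^2$, which is bounded uniformly by $\|f\|_\infty/c^2$. A maximum of uniformly Lipschitz functions is Lipschitz with the same constant, so $1/\rho_{[h_t]}(u)$ is uniformly Lipschitz in $t$. Since $\rho_{[h_t]}$ is bounded between $c$ and $C$, passing to reciprocals gives $|\rho_{[h_t]}(u)-\rho_K(u)|\le M|t|$ with $M=C^2\|f\|_\infty/c^2$.

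\textbf{Derivative at $t=0$.} Let $u$ be such that $\rho_K(u)u$ is a boundary point with a unique outer normal $\alpha_K(u)$. By Rademacher's theorem applied to the Lipschitz function $\rho_K$, almost every $u$ has this property. At such a $u$, the maximum at $t=0$ is attained only at $v_0=\alpha_K(u)$, since any maximizer is an outer normal at $\rho_K(u)u$.

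We then apply a Danskin-type envelope argument. The admissible set $\mathbb{S}^{n-1}$ is compact, and the family of functions is jointly continuous and differentiable in $t$. We will use continuity of $h_t$ and $f$, but we do not need $f$ to be smooth in $v$: we only need uniform convergence of the difference quotients in $t$, and these are exactly $-(u\cdot v)f(v)/(h_t h_K)(v)$. Under these conditions the maximizers $v_t$ converge to $v_0$. This gives
\[
\frac{d}{dt}\Big|_{t=0}\frac{1}{\rho_{[h_t]}(u)}=-\frac{(u\cdot v_0)f(v_0)}{h_K(v_0)^2}.
\]
To justify it, bound the difference quotient below using the test direction $v_0$ and above using $v_t$, then let $t\to0$.

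Next we simplify, using $u\cdot v_0=h_K(v_0)/\rho_K(u)$, which holds because $\rho_K(u)u$ lies on the supporting hyperplane with normal $v_0$. Differentiating $\rho=1/(1/\rho)$ then yields
\[
\rho_K(u)^2\cdot\frac{f(v_0)}{h_K(v_0)\rho_K(u)}=\frac{f(\alpha_K(u))}{h_K(\alpha_K(u))}\rho_K(u),
\]
which is the claimed limit.

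The main obstacle is the envelope step: showing that the one-sided derivatives coincide. This requires that the maximizer at $t=0$ be unique, which is exactly why the limit is only claimed almost everywhere, and it requires handling maximizers $v_t$ that move with $t$. Compactness of the sphere together with uniqueness of $v_0$ forces $v_t\to v_0$. Once we have that convergence, the upper and lower bounds on the difference quotient squeeze to the same value.
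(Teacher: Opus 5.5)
The paper does not prove this lemma; it quotes it from \cite{HLYZ16}. Your argument is correct and is essentially the proof given there: $1/\rho_{[h_t]}(u)=\max_v (u\cdot v)/h_t(v)$ is the support function at $u$ of the polar body of $[h_t]$, which is the convex hull of $\{v/h_t(v)\}$. Their argument squeezes from both sides, testing with $\alpha_K(u)$ and using maximizers $v_t\to\alpha_K(u)$ that are unique at regular points, as you do, though they phrase it with logarithms and allow general perturbations $h_t=h_K+tf+o(t)$.

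Shrinking $\delta$ is harmless: on the original interval, $\rho_{[h_t]}\le \max h_K+\delta\|f\|_\infty$, so the bound $|\rho_{[h_t]}(u)-\rho_K(u)|\le M|t|$ extends there with a larger $M$. Your non-strict inequality is also the correct form, since the strict one in the statement fails at $t=0$.
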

With the help of Lemma \ref{lem3}, we deduce the variational formula of \eqref{E:001}.

\begin{theorem}\label{thm5}
Let $K\in\mathcal{K}_0^n$, and $f\in C(\mathbb{S}^{n-1})$. Then
\begin{equation*}
\lim_{t\rightarrow0}\frac{V_{\mathrm{mix}}([h_K +tf])-V_{\mathrm{mix}}(K)}{t}=\int_{\mathbb{S}^{n-1}}f(u)dS_{V_{\mathrm{mix}, K}}(u).
\end{equation*}
\end{theorem}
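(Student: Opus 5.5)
We prove the variational formula by differentiating under the integral sign in the polar-coordinate representation of $V_{\mathrm{mix}}$ already used in \eqref{E:0001}. Write $h_t=h_K+tf$ and $\rho_t=\rho_{[h_t]}$. For $|t|<\delta$,
\begin{equation*}
V_{\mathrm{mix}}([h_t])=(2\pi)^{-\frac{s}{2}}\int_{\mathbb{S}^{n-1}}\int_0^{\rho_t(u)}e^{-\frac12 r^2\sum_{i=1}^s|u\cdot{\bf e}_i|^2}r^{n-1}\,dr\,du .
\end{equation*}
Set $\Phi(u,R)=\int_0^R e^{-\frac12 r^2\sum_{i=1}^s|u\cdot{\bf e}_i|^2}r^{n-1}dr$. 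Then the difference quotient is
\begin{equation*}
(2\pi)^{-\frac s2}\int_{\mathbb{S}^{n-1}}\frac{\Phi(u,\rho_t(u))-\Phi(u,\rho_K(u))}{t}\,du .
\end{equation*}

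\textbf{Pointwise limit.} Since $\Phi$ is $C^1$ in $R$ with $\partial_R\Phi(u,R)=e^{-\frac12R^2\sum_{i=1}^s|u\cdot{\bf e}_i|^2}R^{n-1}$, the chain rule and Lemma \ref{lem3} give, for almost every $u$, the limit of the integrand:
\begin{equation*}
e^{-\frac12\rho_K(u)^2\sum_{i=1}^s|u\cdot{\bf e}_i|^2}\rho_K(u)^{n}\,\frac{f(\alpha_K(u))}{h_K(\alpha_K(u))}.
\end{equation*}

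\textbf{Domination.} By the mean value theorem and the Lipschitz bound $|\rho_t-\rho_K|<M|t|$ from Lemma \ref{lem3}, the integrand is bounded by $M\sup_{R\le \max\rho_K+M\delta}R^{n-1}$. This is a constant, so dominated convergence applies and
\begin{equation*}
\lim_{t\to0}\frac{V_{\mathrm{mix}}([h_t])-V_{\mathrm{mix}}(K)}{t}=(2\pi)^{-\frac s2}\int_{\mathbb{S}^{n-1}}f(\alpha_K(u))\,\frac{e^{-\frac12|\rho_K(u)u|_s^2}\rho_K(u)^n}{h_K(\alpha_K(u))}\,du,
\end{equation*}
where $|z|_s^2:=\sum_{i=1}^s|z\cdot{\bf e}_i|^2$ is shorthand.

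\textbf{Change of variables.} The remaining step, and the one I expect to need the most care, is rewriting this as an integral against a measure in the normal variable. I would define $S_{V_{\mathrm{mix}},K}$ as the pushforward under the radial Gauss map $\alpha_K$ of the measure
\begin{equation*}
(2\pi)^{-\frac s2}e^{-\frac12|\rho_K(u)u|_s^2}\frac{\rho_K(u)^n}{h_K(\alpha_K(u))}\,du .
\end{equation*}
Equivalently, using $\rho_K^n\,du/h_K(\alpha_K)=\rho_K^{n-1}du/(u\cdot\alpha_K)$, which is the $\mathcal{H}^{n-1}$ element on $\partial K$, it is the pushforward of $(2\pi)^{-\frac s2}e^{-\frac12|x|^2}d\mathcal{H}^{n-1}$ restricted to $\partial K$ under $\nu_K$. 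Here $x$ denotes the first $s$ coordinates of $z\in\partial K$, so this is the measure
\begin{equation*}
S_{V_{\mathrm{mix}},K}(\eta)=(2\pi)^{-\frac s2}\int_{\nu_K^{-1}(\eta)}e^{-\frac{|x|^2}{2}}\,d\mathcal{H}^{n-1}(z).
\end{equation*}
With this definition the displayed limit equals $\int f\,dS_{V_{\mathrm{mix}},K}$ by the abstract change-of-variables formula. The justification has two parts:
\begin{itemize}
\item $\alpha_K$ is defined almost everywhere on $\mathbb{S}^{n-1}$, because the set of boundary points with non-unique normals is $\mathcal{H}^{n-1}$-null and the radial map $u\mapsto\rho_K(u)u$ is bi-Lipschitz.
\item The Jacobian of the radial map is $\rho_K^{n}/h_K(\alpha_K)$.
\end{itemize}
Both are standard and can be cited from \cite{HLYZ16}. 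In the smooth case this recovers the density formula stated in the introduction via $X(x)=h(x)x+\nabla h(x)$ and \eqref{E:0004}.
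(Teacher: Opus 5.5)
Your proposal is correct and follows essentially the same route as the paper: polar coordinates, the pointwise derivative of $\rho_{[h_t]}$ from Lemma \ref{lem3} combined with the chain rule, a mean-value-theorem domination using the Lipschitz bound $|\rho_{[h_t]}-\rho_K|<M|t|$, dominated convergence, and finally the change of variables $\rho_K^n\,du/h_K(\alpha_K)=d\mathcal{H}^{n-1}$ on $\partial K$. Your explicit definition of $S_{V_{\mathrm{mix}},K}$ as the pushforward under $\nu_K$ of $(2\pi)^{-s/2}e^{-|x|^2/2}\,d\mathcal{H}^{n-1}$ on $\partial K$ makes precise the last step, which the paper writes with $dS_K(x)$ where $d\mathcal{H}^{n-1}(x)$ is meant.
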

\begin{proof}
Denote $h_K +tf$ by $h_t$. Using polar coordinates, we have
\begin{equation*}
V_{\text{mix}}([h_t])=\frac{1}{(2\pi)^{\frac{h}{2}}}\int_{\mathbb{S}^{n-1}}\int_0 ^{\rho_{[h_t]}(u)} e^{-\frac{1}{2}r^2 \sum_{i=1}^s |u\cdot {\bf e}_i|^2}r^{n-1}drdu.
\end{equation*}
Fix $u\in \mathbb{S}^{n-1}$ and define
\begin{equation}\label{E:99}
F(\tau)=\int_0^{\tau} e^{-\frac{1}{2}r^2 \sum_{i=1}^s |u\cdot {\bf e}_i|^2}r^{n-1}dr,\quad \tau \in [0,\infty).
\end{equation}
By the mean value theorem and Lemma \ref{lem3}, we have
\begin{equation*}
|F(\rho_{[h_t]}(u))-F(\rho_K (u))|\leq|F^{'} (\theta)||\rho_{[h_t]}(u)-\rho_K (u)|<M|t||F^{'} (\theta)|,
\end{equation*}
where $\theta$ is between $\rho_K (u)$ and $\rho_{[h_t]}(u)$. Since $K\in\mathcal{K}_0^n$ and $f\in C(\mathbb{S}^{n-1})$, there exists $M_1 >0$ such that for $t$ close to 0, $[h_t]\subset M_1 B$ holds. By the definition of $F$, we have that $F'$ is bounded. Therefore, there exists an $M_2 >0$ such that for all $u\in \mathbb{S}^{n-1}$
\begin{equation*}
|F(\rho_{[h_t]}(u))-F(\rho_K (u))|<M_2 |t|.
\end{equation*}
Therefore, the dominated convergence theorem and Lemma \ref{lem3} yield
\begin{align*}
\lim_{t\rightarrow0}\frac{V_{\text{mix}}([h_K +tf])-V_{\text{mix}}(K)}{t}=&\lim_{t\rightarrow0}\frac{1}{(2\pi)^{\frac{s}{2}}}\frac{\int_{\mathbb{S}^{n-1}}F(\rho_{[h_t]}(u))du-\int_{\mathbb{S}^{n-1}}F(\rho_K (u))du}{t}\\
=&\frac{1}{(2\pi)^{\frac{s}{2}}}\int_{\mathbb{S}^{n-1}}\lim_{t\rightarrow0}\frac{F(\rho_{[h_t]}(u))-F(\rho_K (u))}{t}du\\
=&\frac{1}{(2\pi)^{\frac{s}{2}}}\int_{\mathbb{S}^{n-1}}f(\alpha_K (u))e^{-\frac{1}{2}\sum_{i=1}^s |\rho_K(u)(u\cdot {\bf e}_i)|^2}\frac{\rho_K^{n}(u)}{h_K (\alpha_K (u))}du\\
=&\frac{1}{(2\pi)^{\frac{s}{2}}}\int_{\partial K}f(\nu_K (x))e^{-\frac{1}{2}\sum_{i=1}^s |(x\cdot {\bf e}_i)|^2}dS_{K}(x)\\
=&\int_{\mathbb{S}^{n-1}}f(u)dS_{V_{\text{mix}},K}(u).
\end{align*}
\end{proof}

Below, we show that the mixed Euclidean-Gaussian surface area measure is weakly convergent with respect to the Hausdorff metric.
\begin{theorem}\label{thm3.3}
Let $K_j\in \mathcal{K}_0^n$ be a sequence of convex bodies such that $K_j$ converges to $K_0\in \mathcal{K}_0^n$ in the Hausdorff metric. Then $S_{V_{\mathrm{mix}}, K_j}$ converges to $S_{V_{\mathrm{mix}},K_0}$ weakly.
\end{theorem}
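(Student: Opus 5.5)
Our plan is to use the radial-function representation of the measure obtained in the proof of Theorem \ref{thm5}. For $g\in C(\mathbb{S}^{n-1})$ and $K\in\mathcal{K}_0^n$ we have
\begin{equation*}
\int_{\mathbb{S}^{n-1}}g\,dS_{V_{\mathrm{mix}},K}=\frac{1}{(2\pi)^{\frac{s}{2}}}\int_{\mathbb{S}^{n-1}}g(\alpha_K(u))\,e^{-\frac{1}{2}\rho_K(u)^2\sum_{i=1}^s|u\cdot{\bf e}_i|^2}\frac{\rho_K^n(u)}{h_K(\alpha_K(u))}\,du.
\end{equation*}
So it suffices to show that the right-hand side with $K=K_j$ tends to the one with $K=K_0$ for every fixed continuous $g$. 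We will do this with the dominated convergence theorem on $\mathbb{S}^{n-1}$.

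\textbf{Step 1: uniform bounds.} Since $K_j\to K_0\in\mathcal{K}_0^n$ in the Hausdorff metric, $h_{K_j}\to h_{K_0}$ and $\rho_{K_j}\to\rho_{K_0}$ uniformly. Hence for large $j$ there are constants $0<c\le C$ with $c\le h_{K_j},\rho_{K_j}\le C$ on $\mathbb{S}^{n-1}$. Lemma \ref{RelationshipSR} gives the equality of the minima and maxima of $h$ and $\rho$. The integrands are therefore bounded by $\|g\|_\infty C^n/c$, uniformly in $j$. The exponential factor is at most $1$.

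\textbf{Step 2: pointwise convergence.} The factors $e^{-\frac12\rho_{K_j}^2\sum|u\cdot{\bf e}_i|^2}$ and $\rho_{K_j}^n$ converge for every $u$, by uniform convergence of the radial functions. The main point is the convergence $\alpha_{K_j}(u)\to\alpha_{K_0}(u)$. We prove it for every $u$ in the full-measure set $\omega$ where $\alpha_{K_0}$ and all the $\alpha_{K_j}$ are uniquely defined. This set is a countable intersection of full-measure sets, because each radial Gauss map is defined a.e.

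Fix $u\in\omega$ and take a subsequence along which $\alpha_{K_j}(u)\to v\in\mathbb{S}^{n-1}$. The boundary points $x_j=\rho_{K_j}(u)u$ converge to $x_0=\rho_{K_0}(u)u\in\partial K_0$. For every $y\in K_j$ we have $y\cdot\alpha_{K_j}(u)\le x_j\cdot\alpha_{K_j}(u)$. Every point of $K_0$ is a limit of points of $K_j$, so passing to the limit gives $y\cdot v\le x_0\cdot v$ for all $y\in K_0$. Thus $v$ is an outer normal of $K_0$ at $x_0$, and uniqueness forces $v=\alpha_{K_0}(u)$. Since every subsequence has the same limit, the full sequence converges.

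With this in hand, continuity of $g$ and the uniform convergence $h_{K_j}\to h_{K_0}$ give $g(\alpha_{K_j}(u))\to g(\alpha_{K_0}(u))$. Combined with the equicontinuity of $h_{K_j}$, they also give $h_{K_j}(\alpha_{K_j}(u))\to h_{K_0}(\alpha_{K_0}(u))$.

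\textbf{Step 3: conclusion.} Dominated convergence then yields $\int g\,dS_{V_{\mathrm{mix}},K_j}\to\int g\,dS_{V_{\mathrm{mix}},K_0}$, which is the claimed weak convergence. We expect the main obstacle to be Step 2, the a.e. convergence of the radial Gauss maps, since this is the only place where the geometry of convergence of normals enters. The boundedness of the exponential density makes the rest routine.
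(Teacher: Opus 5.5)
Your proposal is correct and follows essentially the same route as the paper: the radial representation of $S_{V_{\mathrm{mix}},K}$, convergence of the radial factors, a.e.\ convergence $\alpha_{K_j}\to\alpha_{K_0}$, and passage to the limit under the integral. The paper writes the integrand as $\rho_K^{n-1}g(\alpha_K(u))/(\alpha_K(u)\cdot u)$, which is the same as yours because $h_K(\alpha_K(u))=\rho_K(u)\,u\cdot\alpha_K(u)$. Your version also proves the a.e.\ convergence of the radial Gauss maps and gives the explicit dominating bound $\|g\|_\infty C^n/c$, both of which the paper only asserts.
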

\begin{proof}
Since $K_i \in \mathcal{K}_o^n$ with $K_i\rightarrow K_0 \in \mathcal{K}_{o}^n$, there exists a constant $c_1>0$ such that $\frac{1}{c_1}B\subseteq K_{i}\subseteq c_1B$ for sufficiently large $i$. Thus, one has
\begin{equation}\nonumber
e^{-\frac{1}{2}\sum_{i=1}^s |\rho_{K_{i}}(u)u\cdot {\bf e}_i|^2}\rho_{K_{i}}^{n-1}(u)\rightarrow e^{-\frac{1}{2}\sum_{i=1}^s |\rho_{K_{0}}(u)u\cdot {\bf e}_i|^2}\rho_{K_{0}}^{n-1}(u), \ \ \mathrm {uniformly\  on\  } \mathbb{S}^{n-1}.
\end{equation}

Let $g\in C(\mathbb{S}^{n-1})$. Since $\nu_{K_i}\rightarrow \nu_{K_0}$ almost everywhere on $\mathbb{S}^{n-1}$ with respect to the spherical Lebesgue measure, then for almost every $u \in \mathbb{S}^{n-1}$, one has
\begin{equation}\nonumber
\frac{g(\alpha_{K_i}(u))}{u \cdot \alpha_{K_i}(u))} \rightarrow \frac{g(\alpha_{K_0}(u))}{u \cdot \alpha_{K_0}(u)}.
\end{equation}
By the definition of $S_{V_{\text{mix}}, K_j}$, we have
\begin{align*}
\int_{\mathbb{S}^{n-1}}g(u)dS_{V_{\text{mix}}, K_j}(u)&=\frac{1}{(2\pi)^{\frac{s}{2}}}\int_{\mathbb{S}^{n-1}}g(\alpha_{K_j} (u))e^{-\frac{1}{2}\sum_{i=1}^s |\rho_{K_j}(u)u\cdot {\bf e}_i|^2}\frac{\rho_{K_j}^{n}(u)}{h_{K_j} (\alpha_{K_j} (u))}du\\
&=\frac{1}{(2\pi)^{\frac{s}{2}}}\int_{\mathbb{S}^{n-1}}e^{-\frac{1}{2}\rho_{K_j}(u)^2\sum_{i=1}^s|u\cdot {\bf e}_i|^2}\frac{g(\alpha_{K_j}(u))}{\alpha_{K_j}(u)\cdot u}\rho_{K_j}^{n-1}(u)du\\
&\rightarrow \frac{1}{(2\pi)^{\frac{s}{2}}}\int_{\mathbb{S}^{n-1}}e^{-\frac{1}{2}\rho_{K_0}(u)^2\sum_{i=1}^s|u\cdot {\bf e}_i|^2}\frac{g(\alpha_{K_0}(u))}{\alpha_{K_0}(u)\cdot u}\rho_{K_0}^{n-1}(u)du\\
&=\int_{\mathbb{S}^{n-1}}g(u)dS_{V_{\text{mix}}, K_0}(u).
\end{align*}
\end{proof}
\begin{lemma}
Let $K\in\mathcal{K}^n_0$. Then the mixed Euclidean-Gaussian surface area measure $S_{V_{\mathrm{mix}},K}$ is absolutely continuous with respect to the surface area measure $S_K$. Moreover, when $K$ is sufficiently smooth, $S_{V_{\mathrm{mix}},K}$ is also absolutely continuous with respect to the spherical Lebesgue measure.
\begin{proof}
Let $\eta\subset\mathbb{S}^{n-1}$ be such that $S_K(\eta)=0$, or equivalently, $\mathcal{H}^{n-1}(\nu_K^{-1}(\eta))=0$, then we conclude that
\[
S_{V_{\text{mix}},K}=\frac{1}{(2\pi)^{\frac{s}{2}}}\int_{x\in\nu_K^{-1}(\eta)}e^{-\frac{1}{2}\sum_{i=1}^s |x\cdot {\bf e}_i|^2}d\mathcal{H}^{n-1}(x)=0.
\]
Similarly, let $\eta\subset\mathbb{S}^{n-1}$ be such that $\mathcal{H}^{n-1}(\eta)=0$, then
\begin{equation*}
S_{V_{\text{mix}},K}=\frac{1}{(2\pi)^{\frac{s}{2}}}\int_{\eta}e^{-\frac{1}{2}\sum_{i=1}^s |(h_{K}(u)u+\nabla h_{K}(u))\cdot {\bf e}_i|^2}\det(\nabla^2h_{K}+h_{K}I)du=0.
\end{equation*}
\end{proof}
\end{lemma}



\section{the mixed euclidean-gaussian minkowski problem }\label{sec4}

We obtain the following estimates to show the existence of solutions to the flow equation \eqref{E:0002}.

\subsection{$C^0$ and $C^1$ estimates }

\begin{lemma}\label{lem4.1}
Suppose $f \in C^{\infty}(\mathbb{S}^{n-1})$ and $0<h_0\in C^{\infty}(\mathbb{S}^{n})$ are uniformly convex even functions. Let $h(\cdot,t)$ be a positive, smooth, and uniformly convex solution to \eqref{E:0002}. Suppose $\mu$ is a nonzero finite even Borel measure not concentrated in any hemisphere with $d\mu =fdx$. Then there exists a constant $C>0$ independent of $t$ such that for every $t\in[0,T)$,
    \begin{equation*}
    1/C \leq h(\cdot,t)\leq C\quad on\ \  \mathbb{S}^{n-1},
    \end{equation*}
\[
1/C\leq \rho(\cdot,t)\leq C\quad on\ \  \mathbb{S}^{n-1}.
\]
\end{lemma}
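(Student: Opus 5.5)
The plan is to get two-sided bounds from two facts: the functional $G(t)$ is preserved (Lemma \ref{lem 6}), and $h(\cdot,t)$ stays even. By Lemma \ref{RelationshipSR}, $\max h=\max\rho$ and $\min h=\min\rho$, so it suffices to bound $h$ above and below.

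\emph{Evenness.} Since $f$ and $h_0$ are even, uniqueness of smooth solutions of the parabolic equation \eqref{E:0002} implies $h(x,t)=h(-x,t)$ for all $t$. The key inputs are that $\tau(t)$ is a scalar and that the weight $e^{-\frac12\sum_{i\le s}|X\cdot\mathbf e_i|^2}$ is invariant under $X\mapsto -X$. Hence every $K_t$ is origin-symmetric.

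\emph{Upper bound.} Let $R(t)=\max_{\mathbb{S}^{n-1}}\rho(\cdot,t)=\rho(\xi_t,t)$. Since $K_t$ is origin-symmetric, it contains the segment $[-R\xi_t,R\xi_t]$, so
\[
h(x,t)\ge R(t)\,|x\cdot\xi_t| .
\]
This is also the second inequality of Lemma \ref{RelationshipSR}. Lemma \ref{lem 6} then gives
\[
G(0)|\mu|=\int_{\mathbb{S}^{n-1}} h(x,t)f^{-1}(x)\,dx\ \ge\ R(t)\int_{\mathbb{S}^{n-1}}|x\cdot\xi_t|\,f^{-1}(x)\,dx\ \ge\ R(t)\,c_0 ,
\]
where
\[
c_0=\min_{\xi\in\mathbb{S}^{n-1}}\int_{\mathbb{S}^{n-1}}|x\cdot\xi|f^{-1}\,dx>0 .
\]
Here $c_0>0$ by continuity in $\xi$, and it is exactly where we use that $f^{-1}dx$ is not concentrated on a great subsphere; since $f>0$ is smooth this is automatic. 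This gives $h\le\rho\le C$.

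\emph{Lower bound.} This is the main obstacle, because $G$ only controls an average of $h$ and does not prevent $K_t$ from flattening. I would use the monotonicity in Lemma \ref{lem10-1}: $V_{\rm mix}(K_t)\ge V_{\rm mix}(K_0)=:v_0>0$. Suppose $\min h(\cdot,t_j)\to0$ along some sequence $t_j$. By symmetry, $K_{t_j}$ lies in the slab $\{|z\cdot x_j|\le \min h(\cdot,t_j)\}$, where $x_j$ is a minimum point. Together with the uniform upper bound this gives $K_{t_j}\subset\{|z\cdot x_j|\le\varepsilon_j\}\cap CB$. Since the density $(2\pi)^{-s/2}e^{-|x|^2/2}\le(2\pi)^{-s/2}$ is bounded, it follows that
\[
V_{\rm mix}(K_{t_j})\le (2\pi)^{-s/2}\,\mathrm{Vol}_n\big(\{|z\cdot x_j|\le\varepsilon_j\}\cap CB\big)\le C'\varepsilon_j\to0 .
\]
This contradicts $V_{\rm mix}(K_{t_j})\ge v_0$. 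So $\min h\ge 1/C$, and by Lemma \ref{RelationshipSR} also $\min\rho\ge1/C$.

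Alternatively, Blaschke selection could pass to a limit body, which would be a symmetric degenerate body of zero $V_{\rm mix}$. The direct slab estimate above avoids needing this. The constants depend only on $f$, $h_0$, and $n$ (through $G(0)$, $v_0$ and $c_0$), and are therefore independent of $t$.
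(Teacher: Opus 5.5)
Your proof is correct and follows the same route as the paper. The conserved quantity $G(t)$ together with $h(\cdot,t)\ge R(t)\,|x\cdot\xi_t|$ gives the upper bound, and the symmetric slab $\{|z\cdot x_j|\le\min h(\cdot,t_j)\}\cap CB$ combined with $V_{\rm mix}(K_t)\ge V_{\rm mix}(K_0)>0$ from Lemma \ref{lem10-1} gives the lower bound; your uniqueness argument for evenness and your explicit slab-volume estimate just make precise what the paper calls obvious or attributes to continuity of $V_{\rm mix}$. One harmless slip: pointwise $\rho\le h$ (since $\rho(u)u\in K_t$), so ``$h\le\rho\le C$'' should read $\max h=\max\rho=R(t)\le C$.
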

\begin{proof}
By Lemma \ref{RelationshipSR}, it suffices to establish the upper and lower bounds of $h(\cdot,t)$. Without loss of generality, we choose $h_0$ to be the support function of $\mathbb{S}^{n-1}$. By the definition \eqref{E:0003} of $G(t)$ and Lemma \ref{lem 6}, we have that for each $t\in [0,T)$
\begin{equation}\label{E:0006}
    G(t)=\frac{1}{|\mu|}\int_{\mathbb{S}^{n-1}}h(u,t)d\mu(u)=G(0)=1>0.
\end{equation}
For each $t\in [0,T)$, write
\[
h(x_t,t)=\max_{u\in \mathbb{S}^{n-1}} h(u,t)=R_t
\]
for some $x_t\in \mathbb{S}^{n-1}$. Obviously, $R_t>0$ is the maximal scaling factor for which $R_t x_t\in \mathcal{M}_t=X(\mathbb{S}^{n-1},t)$. By the definition of the support function, we have that
\begin{equation}
    h(u,t)=\sup_{\xi\in \partial K_t} \langle u, \xi \rangle \geq R_t \langle x_t ,u \rangle_{+}=R_t \max\{\langle x_t ,u \rangle,0\}.
\end{equation}
Therefore, we have
\begin{equation}\label{E:00010}
    1=\frac{1}{|\mu|}\int_{\mathbb{S}^{n-1}}h(u,t)d\mu(u)\geq \frac{1}{|\mu|}R_t\int_{\mathbb{S}^{n-1}} \langle u,x_t \rangle_{+}d\mu(u)\geq \frac{1}{|\mu|}R_t\min_{x\in \mathbb{S}^{n-1}}\int_{\mathbb{S}^{n-1}}\langle u, x \rangle _{+}d\mu(u).
\end{equation}
Namely,
\[
h(\cdot,t)\leq R_t \leq \frac{|\mu|}{\mathop{\min}_{x\in \mathbb{S}^{n-1}} \int_{\mathbb{S}^{n-1}} \langle u,x   \rangle_{+} d\mu(u) }.
\]
So now it suffices to show that
\begin{equation}\label{E:0009}
    F(u)\coloneqq\frac{1}{|\mu|}\min_{x\in \mathbb{S}^{n-1}}\int_{\mathbb{S}^{n-1}}\langle u,x \rangle_{+}d\mu(u)  \geq C_0>0,
\end{equation}
in order to obtain that $h(\cdot, t)$ has a uniform upper bound independent of $t$.

The proof of \eqref{E:0009} is very similar to \cite[formula (3.19)]{FLX23}, which only relies on the fact that the support of $\mu$ is not concentrated on any closed hemisphere of $\mathbb{S}^{n-1}$. We include it below for convenience.

Using the fact that $\mu$ is not contained in any closed hemisphere with Proposition 1.3 in \cite{CW06}, we have
\begin{equation*}
    F(u)>0.
\end{equation*}
Secondly, since the support function is continuous on $\mathbb{S}^{n-1}$, if we take $u_i\in \mathbb{S}^{n-1}$ with $u_i\rightarrow u\in \mathbb{S}^{n-1}$ as $i\rightarrow +\infty$, then $\langle u_i,x \rangle_{+}\rightarrow \langle u,x \rangle_{+}$ uniformly on $\mathbb{S}^{n-1}$. That is,
\begin{equation*}
    |F(u_i)-F(u)|\leq\frac{1}{|\mu|}\int_{\mathbb{S}^{n-1}}|\langle u_i,x \rangle_{+}-\langle u,x \rangle_{+}|d\mu(u)\rightarrow0
\end{equation*}
as $i\rightarrow +\infty$. Therefore $F(u)$ is continuous on a compact set $\mathbb{S}^{n-1}$, and it follows that there exists a constant $C_0>0$ such that \eqref{E:0009} holds.

For the lower bound of $h(\cdot,t)$, we prove it by contradiction. Suppose the contrary that there exists a sequence of times $\{ t_i\in (0,T)\}$ such that
\[
h(u_i,t_i)=\mathop{\inf}_{u\in \mathbb{S}^{n-1}} h(u,t_i)\rightarrow 0\quad \mathrm{as}\ i\rightarrow +\infty, 
\]
where $u_i\in \mathbb{S}^{n-1}$. By the compactness of the sphere, there exists $\theta\in \mathbb{S}^{n-1}$, such that $u_i\rightarrow \theta$ (if necessary, we can choose a subsequence of $\{ u_{i_j}\}$). Let $K_i$ be the Wulff shape generated by $h(\cdot, t)$. Obviously, $K_i\in \mathcal{K}_{e}$. By the uniform upper bound of $h(\cdot,t)$, there exists an $R>0$ such that $K_i \subset B_R$ for every $i$. Therefore, we have
\[
K_i \subset B_R\cap \left\{x\in \R^n:\ |x\cdot u_i|\leq h(u_i,t_i)\right\}.
\]
By the continuity of $V_{\text{mix}}(\cdot)$, we have
\[
V_{\text{mix}}(K_i)\leq V_{\text{mix}} (B_R\cap \{ x\in \R^n:\ |x\cdot u_k|\leq h(u_k,t_k)\})\rightarrow 0,
\]
which is a contradiction since Lemma \ref{lem10-1} and the initial data imply that
\[
V_{\text{mix}}(K_t) \geq V_{\text{mix}}(B_R) >0.
\]
\end{proof}

\begin{lemma}\label{lem4.2}
  Under the same assumptions of Lemma \ref{lem4.1}, there exists a positive constant $C$ independent of $t$ such that for every $t\in [0,T)$,
    \begin{equation*}
        |\nabla h(\cdot,t)|\leq C\quad on\ \mathbb{S}^{n-1},
    \end{equation*}
     \begin{equation*}
        |\nabla \rho (\cdot,t)|\leq C \quad on\ \mathbb{S}^{n-1}.
    \end{equation*}
\end{lemma}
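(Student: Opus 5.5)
The gradient bounds follow from the uniform $C^0$ bounds of Lemma \ref{lem4.1} together with the standard pointwise relations between the support function, the radial function and their gradients recalled in Section \ref{sec2}. No further evolution equation is needed.

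\textbf{Gradient of $h$.} Fix $t\in[0,T)$ and $x\in\mathbb{S}^{n-1}$. The point $X(x,t)=h(x,t)x+\nabla h(x,t)$ lies on $\mathcal{M}_t$, so it equals $\rho(u,t)u$ for the direction $u=X/|X|$. By \eqref{2.2-1},
\[
h^2(x,t)+|\nabla h(x,t)|^2=\rho^2(u,t).
\]
Lemma \ref{lem4.1} gives $\rho(\cdot,t)\le C$ uniformly in $t$. Hence
\[
|\nabla h(x,t)|^2\le \rho^2(u,t)\le C^2,
\]
which is the first estimate.

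\textbf{Gradient of $\rho$.} I would use the second identity quoted in Section \ref{sec2},
\[
h=\frac{\rho^2}{\sqrt{\rho^2+|\nabla\rho|^2}},
\]
where $h$ is evaluated at the outer normal $x(u,t)$ corresponding to the boundary point $\rho(u,t)u$. Rearranging gives
\[
|\nabla\rho|^2=\frac{\rho^4}{h^2}-\rho^2\le \frac{\rho^4}{h^2}.
\]
By Lemma \ref{lem4.1}, $\rho\le C$ and $h\ge 1/C$ at every point and time. Therefore $|\nabla\rho(u,t)|\le C^3$, uniformly in $t$.

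\textbf{Where care is needed.} The only point requiring attention is that both identities are applied pointwise along the flow. They hold because each $\mathcal{M}_t$ is smooth, uniformly convex and encloses the origin, which is the standing hypothesis of Lemma \ref{lem4.1}. This makes the correspondence $u\leftrightarrow x$ between radial directions and normals a diffeomorphism of $\mathbb{S}^{n-1}$.

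The positive lower bound on $h$ is also essential: it is what keeps $\rho^2/h$ from blowing up, and it is exactly what the contradiction argument of Lemma \ref{lem4.1} supplies. Since every constant comes from Lemma \ref{lem4.1}, they are all independent of $t$, and the main work has already been done there.
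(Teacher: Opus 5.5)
Your proposal is correct and follows the paper's proof: both obtain the bounds from the $C^0$ estimates of Lemma \ref{lem4.1} via the identities $\rho^2=h^2+|\nabla h|^2$ and $h=\rho^2/\sqrt{\rho^2+|\nabla\rho|^2}$. Your version makes explicit the rearrangements $|\nabla h|\le\rho$ and $|\nabla\rho|\le\rho^2/h$, which the paper leaves implicit (it attributes the first bound only to ``Lemma \ref{lem4.1} and smoothness'').
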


\begin{proof}
The first inequality follows from Lemma \ref{lem4.1} and smoothness. Let $u,x\in \mathbb{S}^{n-1}$ satisfy
\[
\rho(u,t)u=h(x,t)x+\nabla h(x,t).
\]
Note that
\[
\rho^2=h^2+|\nabla h|^2,\quad h=\frac{\rho^2}{\sqrt{\rho^2+|\nabla \rho|^2}}.
\]
And so once again by Lemma \ref{lem4.1}, we complete the proof.
\end{proof}

\begin{lemma}\label{lem4.3}
 Under the same assumptions of Lemma \ref{lem4.1}, there exists a positive constant $C$ independent of $t$ such that for every $t\in [0,T)$,
    \begin{equation*}
       1/C\leq\tau(t)\leq C .
    \end{equation*}
\end{lemma}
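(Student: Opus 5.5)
The bound on $\tau(t)$ follows directly from its definition \eqref{Tau-function} once we rewrite both integrals through the support and radial functions and feed in the uniform $C^0$ bounds of Lemma \ref{lem4.1}. Since $X(x,t)=h(x,t)x+\nabla h(x,t)$, we have $\langle X,x\rangle = h(x,t)$. The numerator should be read, as in the proof of Lemma \ref{lem 6}, as the quantity
\[
N(t)=\int_{\mathbb{S}^{n-1}}e^{-\frac{1}{2}\rho^2\sum_{i=1}^s|u\cdot {\bf e}_i|^2}\rho^n\,du,
\]
which is the form that makes $G(t)$ constant. The denominator is
\[
D(t)=(2\pi)^{\frac{s}{2}}\int_{\mathbb{S}^{n-1}} f^{-1}h\,dx .
\]
We bound $N$ and $D$ separately from above and below.

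\textbf{The denominator.} Here we use only Lemma \ref{lem 6}. By \eqref{E:0006},
\[
\int_{\mathbb{S}^{n-1}} f^{-1}h\,dx=|\mu|G(0),
\]
which is a fixed positive constant, so $D(t)$ is constant in $t$. Alternatively, one can use $1/C\le h\le C$ from Lemma \ref{lem4.1} together with $0<\min f\le f\le \max f$, since $f$ is smooth and positive on the compact sphere. This gives $D(t)$ between two positive constants.

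\textbf{The numerator.} By Lemma \ref{lem4.1}, $1/C\le \rho(\cdot,t)\le C$. Since $\sum_{i=1}^s|u\cdot{\bf e}_i|^2\le |u|^2=1$, the exponential factor satisfies
\[
e^{-\frac{1}{2}C^2}\le e^{-\frac{1}{2}\rho^2\sum_{i=1}^s|u\cdot {\bf e}_i|^2}\le 1 .
\]
Hence
\[
e^{-C^2/2}C^{-n}|\mathbb{S}^{n-1}|\le N(t)\le C^n|\mathbb{S}^{n-1}|,
\]
and both bounds are independent of $t$.

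Combining the two estimates gives $1/C\le \tau(t)\le C$ after renaming constants.

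The only point requiring care is a notational ambiguity. The numerator in \eqref{Tau-function} is written as $\int e^{-\frac12\sum|X\cdot{\bf e}_i|^2}|X|^n\,dx$, integrated against $dx$ rather than $du$. If we take it literally in the $x$-variable, the same argument still works: $|X|=\rho\in[1/C,C]$ by \eqref{2.2-1}, so the integrand is again pinched between positive constants. So the estimate holds under either reading, and no higher-order estimate is needed.
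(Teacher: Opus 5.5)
Your proof is correct and matches the paper's argument, which reads the bounds directly off the definition of $\tau(t)$ using the $C^0$ estimates of Lemma \ref{lem4.1} (the paper also cites Lemma \ref{lem4.2}). Since $|X|=\rho$, you are right that the gradient bound of Lemma \ref{lem4.2} is not actually needed, and your treatment of the $dx$ versus $du$ ambiguity in the numerator is a useful clarification.
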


\begin{proof}
By the definition \eqref{Tau-function} of $\tau (t)$, Lemma \ref{lem4.1}, and Lemma \ref{lem4.2}, we immediately obtain the upper and lower bounds.
\end{proof}

\subsection{$C^2$ estimates}

We next derive an upper bound for the support function of the hypersurfaces $\mathcal{M}_t$ evolved by \eqref{E:0002}.

\begin{lemma}\label{lem10}
  Assume the same assumptions as in Lemma \ref{lem4.1}. Then there exists a positive constant $C$ independent of $t$ such that for every $t\in [0,T)$,
    \begin{equation*}
        \det(h(\cdot,t)_{ij}+h(\cdot,t)\delta_{ij})\leq C \quad \text{on}\ \mathbb{S}^{n-1}.
    \end{equation*}
\end{lemma}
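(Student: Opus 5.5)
Since $\partial_t h/h$ is bounded above and below exactly when $f e^{-\frac12\sum_{i=1}^s|X\cdot{\bf e}_i|^2}\det(h_{ij}+h\delta_{ij})$ is, I plan a maximum principle argument in the style of Tso and Chou--Wang. The test function is
\[
Q(x,t)=\frac{\partial_t h+(2\pi)^{\frac{s}{2}}\tau(t)h}{h-\varepsilon_0}=\frac{f h e^{-\frac12\sum_{i=1}^s|X\cdot{\bf e}_i|^2}\det(h_{ij}+h\delta_{ij})}{h-\varepsilon_0},
\qquad \varepsilon_0=\tfrac12\min_{\mathbb{S}^{n-1}\times[0,T)}h.
\]
By Lemma \ref{lem4.1}, $\varepsilon_0>0$ is a uniform constant, so $h-\varepsilon_0\geq\varepsilon_0$. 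By Lemmas \ref{lem4.1}--\ref{lem4.3}, the functions $h$, $|\nabla h|$, $\rho=|X|$, $\tau$ and the exponential weight $\Phi:=e^{-\frac12\sum_{i=1}^s|X\cdot{\bf e}_i|^2}$ all have uniform positive bounds. It therefore suffices to bound $Q$ above, since $\det(b_{ij})\le C\,Q$. Write $b_{ij}=h_{ij}+h\delta_{ij}$ and $b^{ij}$ for its inverse. Set $\Psi:=fh\Phi$, a function of $x$, $h$ and $\nabla h$ that is bounded in $C^1$ along the flow, so that $\partial_t h=\Psi\det b-(2\pi)^{s/2}\tau h$.

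\textbf{Step 1.} At a spatial maximum point of $Q(\cdot,t)$ we have $\nabla Q=0$, so $\nabla_i\partial_t h=\frac{(\partial_t h+c\tau h)h_i}{h-\varepsilon_0}-c\tau h_i$ with $c=(2\pi)^{s/2}$. We also have $\nabla^2 Q\le0$, which gives
\[
\nabla_{ij}\partial_t h+\partial_t h\,\delta_{ij}\le \frac{(\partial_t h+c\tau h)}{h-\varepsilon_0}\,(b_{ij}-\varepsilon_0\delta_{ij}) + (\text{terms } -c\tau b_{ij}+\dots).
\]
Here the correction terms come from differentiating $\tau h$, and $\tau$ depends only on $t$.

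\textbf{Step 2.} Compute $\partial_t Q$. Differentiating $\partial_t h=\Psi\det b-c\tau h$ in $t$ gives $\partial_t\det b=\det b\, b^{ij}\partial_t b_{ij}$. There is also a term from $\partial_t\Psi$, which involves $\partial_t h$ and $\nabla\partial_t h$ through $X=h x+\nabla h$. By Step 1, $\nabla\partial_t h$ equals $Q$ times bounded quantities, so $|\partial_t\Psi|\le C(1+Q)$. Inserting the Hessian inequality from Step 1 gives
\[
\Psi\det b\; b^{ij}\partial_t b_{ij}\le \Psi\det b\Big(Q\,(n-1)-\varepsilon_0 Q\,\mathrm{tr}(b^{ij})+C\,\mathrm{tr}(b^{ij})+C\Big).
\]
By the AM--GM inequality, $\mathrm{tr}(b^{ij})\ge (n-1)(\det b)^{-1/(n-1)}\ge c\,Q^{-1/(n-1)}$. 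Once $Q$ is large, the good term $-\varepsilon_0 Q\,\mathrm{tr}\,b^{ij}$ absorbs $C\,\mathrm{tr}\,b^{ij}$, and we obtain
\[
\partial_t Q\le C Q^2\big(C'-\varepsilon_0 c\,Q^{\frac{1}{n-1}}\big).
\]
The $\partial_t\tau$ contribution also has to be controlled here. It is a ratio of integrals, and its derivative is bounded by $C(1+\max Q)$ using the radial evolution \eqref{E:0005}, so it is absorbed in the same way. Hence $\partial_t Q<0$ whenever $\max Q\ge C$, and therefore $Q\le\max\{C,\max Q(\cdot,0)\}$.

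The main obstacle is the bookkeeping in Step 2 for the derivatives of the Gaussian factor $\Phi$. Its $t$-derivative contains $\sum_{i=1}^s(X\cdot{\bf e}_i)(\partial_t X\cdot{\bf e}_i)$, and $\partial_t X=\partial_t h\,x+\nabla\partial_t h$ is of size $Q$. I expect these terms to contribute only $CQ^2$, which is dominated by the $-\varepsilon_0 Q^{2+1/(n-1)}$ term, but this needs care. One must use the critical-point relation from Step 1 to convert every appearance of $\nabla\partial_t h$ into a linear expression in $Q$, and must not introduce second derivatives of $h$ beyond those appearing in $b^{ij}\partial_t b_{ij}$.
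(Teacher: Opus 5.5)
There is a genuine gap, at the absorption step in Step 2. Your Hessian inequality from Step 1 is fine. Write $\phi=\Psi\det b=Q(h-\varepsilon_0)$ and differentiate $\phi$ directly; this also makes $\partial_t\tau$ irrelevant. At a spatial maximum of $Q$ one gets
\[
\partial_t Q\le (n-2)Q^2-\varepsilon_0Q^2\,\mathrm{tr}(b^{-1})+Q\,\partial_t\log\Psi+CQ .
\]
Here the $(n-1)Q^2$ comes from $\phi\, b^{ij}\partial_t b_{ij}/(h-\varepsilon_0)$, and the $-Q^2$ from differentiating the denominator.

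The problem is the lower bound you use for $\mathrm{tr}(b^{-1})$. AM--GM gives $\mathrm{tr}(b^{-1})\ge(n-1)(\det b)^{-\frac{1}{n-1}}\ge c_0Q^{-\frac{1}{n-1}}$, which \emph{decays} as $Q$ grows. So the good term is only $-c_0\varepsilon_0Q^{2-\frac{1}{n-1}}$, not $-c_0\varepsilon_0Q^{2+\frac{1}{n-1}}$ as in your final display; the sign of the exponent is flipped.

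That good term is of lower order than the quadratic bad terms. Besides $(n-2)Q^2$, at the critical point one has $\partial_tX=Q(X-\varepsilon_0x)-(2\pi)^{\frac{s}{2}}\tau X$. Hence $Q\,\partial_t\log\Psi$ contains $Q^2(h-\varepsilon_0)/h>0$, and also $-Q^2\sum_{i=1}^s(X\cdot\mathbf{e}_i)\big((X-\varepsilon_0x)\cdot\mathbf{e}_i\big)$, which has no sign. So $\partial_tQ<0$ for large $Q$ does not follow. Heuristically, if all principal radii are of size $\Lambda\to\infty$, then $\det b\sim\Lambda^{n-1}$ while $\mathrm{tr}(b^{-1})\sim(n-1)/\Lambda\to0$. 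Nothing in your test function penalizes large $\det b$.

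The reason is structural. Tso's $h-\varepsilon_0$ trick is built for contracting flows whose speed grows with curvature (e.g. $\partial_th=-K$). There the term it produces, a trace of the curvature matrix $b^{-1}$, is large exactly when the speed is. Here the speed $\det b$ grows with the principal radii, so the good term has to involve $\mathrm{tr}(b)$. A factor depending on $h$ alone only ever generates $b^{ij}h_{ij}$ and $b^{ij}h_ih_j$, i.e. $\mathrm{tr}(b^{-1})$-type terms.

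The paper gets $\mathrm{tr}(b)$ from the gradient. It takes $P=\frac{1}{1-\varepsilon M}\frac{HS}{h}$, where $\frac{HS}{h}=f\Phi\det b$ and $M=\frac12\rho^2=\frac12(h^2+|\nabla h|^2)$. Since $M_{ij}$ contains $\sum_k(b_{ki}-h\delta_{ki})(b_{kj}-h\delta_{kj})$, the term $-HS^{ij}M_{ij}$ yields
\[
-HS^{ij}b_{ki}b_{kj}=-HS\,\mathrm{tr}(b)\le-(n-1)HS^{1+\frac{1}{n-1}}.
\]
This gives the superquadratic good term $-C_3P^{2+\frac{1}{n-1}}$, which dominates $C_2P^2$. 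You need to build $\rho^2$ (equivalently $|\nabla h|^2$) into your test function in this way; once you do, the argument becomes essentially the paper's.
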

\begin{proof}
 Let
    \begin{equation*}
        H:=fe^{-\frac{1}{2}\sum_{i=1}^s|(h(x,t)x+\nabla h(x,t))\cdot {\bf e}_i|^2}h\quad \mathrm{and}\quad  S:=\det(h_{ij}+h\delta_{ij}),\quad \forall x\in \mathbb{S}^{n-1}.
    \end{equation*}
Let also $M:=\frac{1}{2}\rho(u)^2$, where $u$ and $x$ are related by \eqref{2.2-1}. 

For $\varepsilon>0$ small enough, define the function
\begin{equation}\label{4.5}
    P:=\frac{1}{1-\varepsilon M}\frac{HS}{h}.
\end{equation}
For every fixed $t$, we know that $P(u,t)$ attains its maximum at some point in $\mathbb{S}^{n-1}$. Hence, after taking a suitable rotation, we can take an orthonormal frame such that the cofactor matrix $S^{ij}$  of $h_{ij}+h\delta_{ij}$ is diagonal at this maximal point. We then compute the following
\begin{equation}\label{E:00088}
\left\{
\begin{aligned}
&   0=P_i=\frac{1}{1-\varepsilon M}\left(\frac{HS}{h}\right)_i+\frac{\varepsilon M_i}{(1-\varepsilon M)^2}\left(\frac{HS}{h}\right),\\
& 0\geq P_{ij}
    =\frac{\left(\frac{HS}{h}\right)_{ij}}{1-\varepsilon M}+\frac{\varepsilon\left(\frac{HS}{h}\right)M_{ij}}{(1-\varepsilon M)^2},\\
&\partial_tP=\frac{\partial_t\left(\frac{HS}{h}\right)}{(1-\varepsilon M)}+\frac{\varepsilon\left(\frac{HS}{h}\right)\partial_tM}{(1-\varepsilon M)^2}.
\end{aligned}
\right.
\end{equation}
Here, $P_{ij}\leq 0$ means that it is a negative semi-definite matrix. Furthermore, we have
\begin{equation}\label{E:00066}
     \partial_tP-HS^{ij}P_{ij}
     =\frac{1}{(1-\varepsilon M)}\left[\partial_t\left(\frac{HS}{h}\right)-HS^{ij}\left(\frac{HS}{h}\right)_{ij}\right]+\frac{\varepsilon\left(\frac{HS}{h}\right)}{(1-\varepsilon M)^2}(\partial_tM-HS^{ij}M_{ij}).
\end{equation}
 To deduce the upper bound of \eqref{E:00066}. we estimate the terms $\partial_t\left(\frac{HS}{h}\right)-HS^{ij}\left(\frac{HS}{h}\right)_{ij}$ and $\partial_tM-HS^{ij}M_{ij}$.

Recalling \eqref{E:0002} and \eqref{2.2-1}, we have
\begin{equation}
  \left\{
\begin{aligned}
&   \partial_t M=HSh-2(2\pi)^{\frac{s}{2}}\tau M+\sum_{k}H_kh_kS+\sum_{k}HS_kh_k\nonumber,\\
&  M_i=hh_i+\sum_{k} h_kh_{ki},\\
& M_{ij}=h_jh_i+h(b_{ij}-h\delta_{ij})+\sum_{k}(b_{kj}-h\delta_{kj})(b_{ki}-h\delta_{ki})+\sum_{k} h_k(b_{ki}-h\delta_{ki})_j.
\end{aligned}
\right.
\end{equation}
By \cite[Appendix A]{F17}, we know that
\begin{equation}\label{E:00090}
  \left\{
\begin{aligned}
& S^{ij}(h_{ij}+h\delta_{ij})=(n-1)S,\\
& [\det b_{ij}]_k=S^{ij}b_{ijk}.
    \end{aligned}
\right.
\end{equation}
Then, we have 
\begin{align*}  HS^{ij}M_{ij}=
    HS^{ij}\sum_{k} b_{kj}b_{ki}-(n-1)HSh+\sum_{k}Hh_kS_k.\nonumber
\end{align*}
Consequently, 
\begin{align}\label{E:00098}
    \partial_tM-HS^{ij}M_{ij}
    =nHSh-2(2\pi)^{\frac{s}{2}}\tau M+\sum_{k}H_kh_kS-HS^{ij}\sum_{k}b_{kj}b_{ki}.
\end{align}
Next, we will proceed with computing $\partial_t\left(\frac{HS}{h}\right)-HS^{ij}\left(\frac{HS}{h}\right)$. Equations \eqref{E:0002} and \eqref{E:00090}  yield
\begin{equation}\nonumber
  \left\{
\begin{aligned}  & \left(\frac{HS}{h}\right)_i=\frac{(HS)_i}{h}-\frac{(HS)h_i}{h^2},\\
    &
     \left(\frac{HS}{h}\right)_{ij}=\frac{(HS)_{ij}}{h}-\frac{(HS)h_{ij}}{h^2}-2(\log h)_i\left(\frac{HS}{h}\right)_j,\\
     &  \partial_t\left(\frac{HS}{h}\right)=\frac{S\partial_tH}{h}+HS^{ij}\left(\frac{HS}{h}\right)_{ij}+(n-2)\left(\frac{HS}{h}\right)^2+2HS^{ij}(\log h)_i\left(\frac{HS}{h}\right)_j-(n-2)(2\pi)^{\frac{s}{2}}\tau\left(\frac{HS}{h}\right)
\end{aligned}
\right.
\end{equation}
Hence, we have
\begin{align}\label{4.10}
     \partial_t\left(\frac{HS}{h}\right)-HS^{ij}\left(\frac{HS}{h}\right)_{ij}
     \leq (n-1)\left(\frac{HS}{h}\right)^2-(n-1)(2\pi)^{\frac{s}{2}}\tau\left(\frac{HS}{h}\right)-\left(\frac{HS}{h}\right)\partial_tM\sum_{i=1}^{s}|u\cdot {\bf e}_i|^2,
\end{align}
where we use the fact that
\begin{align}
    2HS^{ij}(\log h)_i\left(\frac{HS}{h}\right)_j=
    -\frac{2\varepsilon(n-1) H^2S^2|\nabla h|^2}{h^2(1-\varepsilon M)} \leq 0,\nonumber
\end{align}
from \eqref{E:00088} and \eqref{E:00090}.

Now we recall the following in \cite{S93}, 
\begin{equation*}
   \left\{
\begin{aligned}  
&\sum_{i=1}^{n-1}\lambda_i\geq(n-1)\left(\prod_{i=1}^{n-1}\lambda_i\right)^{\frac{1}{n-1}},\\
&S=\det(h_{ij}+h\delta_{ij})=\prod_{i=1}^{n-1}\lambda_i,\\
&S^{ij}b_{kj}b_{ki}=S {\rm{tr}} b_{ij}=S\sum_{i=1}^{n-1}\lambda_i.
\end{aligned}  
\right.
\end{equation*}
So we have
\begin{equation*}   S^{ij}b_{kj}b_{ki}\geq(n-1)S^{1+\frac{1}{n-1}}.
\end{equation*}
Moreover,
by the definition of $H$, we have
\begin{equation}\label{4.12}
\frac{\varepsilon\left(\frac{HS}{h}\right)}{(1-\varepsilon M)^2}H_kh_kS=\frac{\varepsilon\left(\frac{HS}{h}\right)^2}{(1-\varepsilon M)^2}h_k^2+\frac{\varepsilon\left(\frac{HS}{h}\right)^2}{(1-\varepsilon M)^2}\frac{f_k}{f}h_kh+\frac{\varepsilon\left(\frac{HS}{h}\right)^2}{(1-\varepsilon M)^2}\rho\rho_kh_kh\leq CP^2.
\end{equation}
Therefore, by \eqref{E:00066},  \eqref{E:00098}, \eqref{4.10} and \eqref{4.12}, we have
\begin{align*}
     &\partial_tP-HS^{ij}P_{ij}\\
     \leq&\frac{1}{(1-\varepsilon M)}\left[(n-1)\left(\frac{HS}{h}\right)^2-(n-1)(2\pi)^{\frac{s}{2}}\tau\left(\frac{HS}{h}\right)-\left(\frac{HS}{h}\right)\partial_tM\sum_{i=1}^{s}|u\cdot {\bf e}_i|^2\right]\\
&+\frac{\varepsilon\left(\frac{HS}{h}\right)}{(1-\varepsilon M)^2}\left[nHSh-2(2\pi)^{\frac{s}{2}}\tau M+H_kh_kS-HS^{ij}b_{kj}b_{ki}\right]\\
     =&\frac{(n-1)\left(\frac{HS}{h}\right)^2}{(1-\varepsilon M)}-\frac{(n-1)(2\pi)^{\frac{s}{2}}\tau\left(\frac{HS}{h}\right)}{(1-\varepsilon M)}-\frac{\left(\frac{HS}{h}\right)^2}{(1-\varepsilon M)}\rho^2\sum_{i=1}^s|u\cdot {\bf e}_i|^2+\frac{\left(\frac{HS}{h}\right)}{(1-\varepsilon M)}(2\pi)^{\frac{s}{2}}\tau\rho^2\sum_{i=1}^s|u\cdot {\bf e}_i|^2\\
&+\frac{n\varepsilon\left(\frac{HS}{h}\right)^2}{(1-\varepsilon M)^2}h^2-\frac{(2\pi)^{\frac{s}{2}}\varepsilon\tau\left(\frac{HS}{h}\right)\rho^2}{(1-\varepsilon M)^2}+\frac{\varepsilon\left(\frac{HS}{h}\right)}{(1-\varepsilon M)^2}H_kh_kS-\frac{\varepsilon\left(\frac{HS}{h}\right)}{(1-\varepsilon M)^2}HS^{ij}b_{kj}b_{ki}\\
    =&-\left[\rho^2\sum_{i=1}^s|u\cdot {\bf e}_i|^2-n+1\right]\frac{\left(\frac{HS}{h}\right)^2}{(1-\varepsilon M)}+\frac{\left(\frac{HS}{h}\right)}{(1-\varepsilon M)}(2\pi)^{\frac{s}{2}}\tau\left[\rho^2\sum_{i=1}^s|u\cdot {\bf e}_i|^2-n+1\right]\\
    &+\frac{n\varepsilon\left(\frac{HS}{h}\right)^2}{(1-\varepsilon M)^2}h^2-\frac{(2\pi)^{\frac{s}{2}}\varepsilon\tau\left(\frac{HS}{h}\right)\rho^2}{(1-\varepsilon M)^2}+\frac{\varepsilon\left(\frac{HS}{h}\right)}{(1-\varepsilon M)^2}H_kh_kS-\frac{\varepsilon\left(\frac{HS}{h}\right)}{(1-\varepsilon M)^2}HS^{ij}b_{kj}b_{ki}\\
\leq&CP+C_1P^2+\frac{\varepsilon\left(\frac{HS}{h}\right)}{(1-\varepsilon M)^2}H_kh_kS-\frac{\varepsilon(n-1)\left(\frac{HS}{h}\right)}{(1-\varepsilon M)^2}HS^{1+\frac{1}{n-1}}\\
    \leq&CP+C_2P^2-\left[\frac{\left(\frac{HS}{h}\right)}{1-\varepsilon M}\right]^{2+\frac{1}{n-1}}H^{-\frac{1}{n-1}}h^{1+\frac{1}{n-1}}(1-\varepsilon M)^{\frac{1}{n-1}}\varepsilon(n-1)\\
    \leq&CP+C_2P^2-C_3P^{2+\frac{1}{n-1}},
\end{align*}
for some positive constants $C,C_1,C_2,C_3$. Since $S^{ij}$ is a positive definite diagonal matrix and $P_{ij}$ is negative semi-definite at the maximal point, we have $\sum_{i,j=1}^n S^{ij} P_{ij} \leq 0$. Consequently, by \eqref{E:00088}, we have
\begin{equation*}
    \partial_tP<0,
\end{equation*}
if we take $P$ sufficiently large. Therefore, we obtain that for every $t\in[0,T)$,
\begin{equation*}
    P(\cdot,t)<P(\cdot,0)\leq \tilde{C},\quad \mathrm{on}\ \mathbb{S}^{n-1}.
\end{equation*}
By \eqref{4.5} and Lemma \ref{lem4.1}, we have
\begin{equation*}
    \frac{1}{C}S\leq P\leq CS
\end{equation*}
for a positive constant $C$.
This implies that $S$ has a uniform upper bound.
\end{proof}

In the following, we prove that the eigenvalues of the matrix $b_{ij}$ are bounded by positive constants from both above and below.

Let $\mathcal{M}^{*}_t$ be the polar set of $\mathcal{M}_t=X(\mathbb{S}^{n-1},\cdot)$ and $h^{*}(\cdot,t)$ be the support function of $\mathcal{M}^{*}_t$. Since
\begin{equation}
    \rho(\cdot,t)=\frac{1}{h^* (\cdot,t)}
\end{equation}
and
\begin{equation}
    h(x,t)^{n+1}h^* (u,t)^{n+1}\det(h_{ij}+h\delta_{ij})\det(h^*_{ij}+h^*\delta_{ij})=1,
\end{equation}
we could reformulate the flow \eqref{E:0002} as
\begin{equation}\label{E:00020}
    \begin{cases}
        \partial_t h^{*}(u,t)=-fe^{-\frac{1}{2}\sum_{i=1}^s|(h^* (u,t)u+\nabla h^*)\cdot {\bf e}_i|^2}\frac{\rho^*(x,t)^{n+1}}{(h^*)^{n}\det(h^*_{ij}+h^*\delta_{ij})}+(2\pi)^{\frac{s}{2}}h^* (u,t)\tau(t),\\
        h(u,0)=\frac{1}{h_0},
    \end{cases}
\end{equation}
where
\begin{equation*}
    \rho^*(x,t)=\left(h^* (u,t)^2 +|\nabla h^* (u,t)|^2\right)^{\frac{1}{2}}
\end{equation*}
is the value of the radial function at
\begin{equation*}
    x=\frac{h^* u+\nabla h^*}{\left(h^* (u,t)^2 +|\nabla h^* (u,t)|^2\right)^{\frac{1}{2}}}\in \mathbb{S}^{n-1}.
\end{equation*}
Notice also that function $f$ takes its value at the point $x$. 

By Lemma \ref{lem10}, we deduce that there exists a positive constant $C$ independent of $t$ such that
\begin{equation}\label{New1}
\det(h^*_{ij}+h^*\delta_{ij})\geq C\quad \mathrm{on}\quad \mathbb{S}^{n-1}.
\end{equation}
Therefore, by the above estimate, it is enough to establish the upper bound.

In the proof of following lemma, we use the notation $h$ instead of $h^*$ and $b$ instead of $b^*$ for simplicity.

\begin{lemma}\label{lem2}
There is a positive constant $C$ independent of $t$ such that for $i=1,\cdots,n-1$, the principal radii  of curvature $\lambda_i$ of the hypersufaces $M_t$ satisfy
\begin{equation}\label{E:11}
\frac{1}{C}\leq\lambda_i(\cdot,t)\leq C\quad \text{on} ~\mathbb{S}^{n-1}\times[0,T).
\end{equation}
\end{lemma}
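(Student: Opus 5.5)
Following the convention just fixed, in this proof $h$ denotes the polar support function $h^*$ and $b_{ij}=h_{ij}+h\delta_{ij}$ its radii matrix. By \eqref{New1}, $\det b\geq C^{-1}$, so it suffices to bound the largest eigenvalue of $b$ from above. Indeed, the principal radii of $\mathcal{M}^*_t$ and of $\mathcal{M}_t$ are related through the polar duality identity $h^{n+1}(h^*)^{n+1}\det b\,\det b^*=1$ together with the $C^0$, $C^1$ bounds of Lemmas \ref{lem4.1}–\ref{lem4.2}. Hence an upper bound for the radii of $\mathcal{M}^*_t$ gives a lower bound for the radii $\lambda_i$ of $\mathcal{M}_t$. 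Then $\lambda_i\geq C^{-1}$ and $\prod\lambda_i=S\leq C$ (Lemma \ref{lem10}) give $\lambda_i\leq C$. Conversely, the upper bound on $b^*$, combined with $\det b^*\geq C^{-1}$, bounds the eigenvalues of $b^*$ from below as well. So everything reduces to an upper bound for the eigenvalues of $b$ along \eqref{E:00020}.

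I would run a maximum principle on the auxiliary function
\[
E(u,t)=\log \lambda_{\max}(b_{ij}) - A\log h + \frac{B}{2}\big(h^2+|\nabla h|^2\big),
\]
with constants $A,B$ to be chosen; equivalently one can use $\log b_{11}$ in a frame diagonalizing $b$ at the maximum point. Write \eqref{E:00020} as $\partial_t h=-\Psi(u,h,\nabla h)\,(\det b)^{-1}+(2\pi)^{s/2}\tau h$. Here
\[
\Psi=f(x)e^{-\frac12\sum_{i\le s}|(hu+\nabla h)\cdot\mathbf e_i|^2}\rho^{*\,n+1}h^{-n}
\]
is smooth in $(u,h,\nabla h)$, and by Lemmas \ref{lem4.1}–\ref{lem4.3} it is bounded above and below with bounded derivatives. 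Taking the log gives
\[
\log(-\partial_t h+c\tau h)=\log\Psi-\log\det b .
\]
Differentiating twice in $e_1$, using concavity of $\log\det$ (the term $-b^{ik}b^{jl}b_{ij1}b_{kl1}$) and commuting covariant derivatives on $\mathbb S^{n-1}$ ($b_{11ii}-b_{ii11}=b_{11}-b_{ii}$), gives the evolution of $b_{11}$ with linearized operator $\mathcal L=\partial_t-\Psi(\det b)^{-1}b^{ij}\nabla_{ij}$ (up to a positive factor). The terms $\nabla_{11}\log\Psi$ produce contributions at most $C(1+b_{11}^2)$ from the $\nabla h$-dependence, since $\nabla_1(\nabla h)$ involves $b$. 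The term $|\nabla h|^2$ in $E$ yields $-B\,\Psi(\det b)^{-1}\sum b^{ii}b_{ii}^2\approx -B\,\mathrm{tr}\, b$, which is meant to absorb these quadratic terms. The term $-A\log h$ yields a good term $A\sum b^{ii}$ up to bounded errors.

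At a spatial maximum of $E$ with $b_{11}$ large, I expect $\mathcal L E\leq C(1+b_{11}) + C b_{11}\sum b^{ii} - B c\, b_{11} - A c\sum b^{ii}$, since $\det b$ is bounded above and below in the relevant regime. Choosing $B$, then $A$, large gives $\mathcal L E<0$ once $b_{11}$ exceeds a constant. Hence $E$, and therefore $b_{11}$, is bounded by its initial data and the $C^0$, $C^1$ bounds. The main obstacle is this bookkeeping. The Gaussian factor depends on $\nabla h$, so $\nabla_{11}\log\Psi$ contains terms like $b_{11}^2$ and third derivatives $h_{k11}$. I would handle the third-order terms using the critical-point relation $\nabla E=0$ together with the concavity term, and if the quadratic terms resist absorption I would switch to $\log\mathrm{tr}\, b$ or $\log\lambda_{\max}$ with a larger multiple of $|\nabla h|^2$.
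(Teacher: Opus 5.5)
Your reduction and test function are the paper's: it runs the maximum principle on $\lambda_{\max}(b)\,h^{A}e^{B|\nabla h|^2}$, $A<0$, for the polar flow \eqref{E:00020}. However, the closing inequality you predict does not close, and it rests on a bound you do not have.

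\textbf{The missing upper bound on $\det b$.} You use that $\det b$ is bounded above and below. For the polar body only the lower bound \eqref{New1} is available. By the duality identity and Lemma~\ref{lem4.1}, an upper bound on $\det b^*$ is equivalent to a lower bound on $S$ for $\mathcal M_t$, which is part of what the lemma is meant to prove. Without it, nothing prevents $\Phi:=(2\pi)^{s/2}\tau h-\partial_t h=\Psi/\det b$ from being small at the maximum point. Your good terms all carry the factor $\Phi$ in your $\mathcal L$, so they cannot beat the time-derivative terms that do not carry it. Since $\partial_t b_{11}=(2\pi)^{s/2}\tau b_{11}-\Phi_{11}-\Phi$ and $\partial_t h_k=(2\pi)^{s/2}\tau h_k-\Phi_k$, the parts $\log b_{11}$ and $\tfrac{B}{2}(h^2+|\nabla h|^2)$ of $E$ contribute $(2\pi)^{s/2}\tau\bigl(1+B(h^2+|\nabla h|^2)\bigr)$ with no factor $\Phi$.

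The missing observation is that $\partial_t(-A\log h)=-(2\pi)^{s/2}\tau A+A\Phi/h$. Taking $A>1+B\sup(h^2+|\nabla h|^2)$ makes the $\Phi$-free part negative. This is what the paper's choice of $A$ is for: it makes the coefficient of $((2\pi)^{s/2}h\tau-\partial_t h)^{-1}$ negative. After that, every remaining term carries the factor $\Phi>0$, and no upper bound on $\det b$ is needed.

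\textbf{The term $+Cb_{11}\sum b^{ii}$.} This term in your inequality would be fatal: for $n=3$, $\det b\approx 1$, $b_{11}=L$, one has $\sum b^{ii}\approx L$, so it is of order $L^2$ against good terms of order $(A+B)L$. It is in fact absent. For $\log b_{11}$ the commutator enters as $\Phi\, b^{11}\sum_i b^{ii}(b_{ii}-b_{11})=\Phi\bigl((n-1)b^{11}-\sum_i b^{ii}\bigr)\le 0$.

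\textbf{The third-order terms.} Absorption by $-B\,\mathrm{tr}\,b$ only handles the $B$-independent $Cb_{11}$ coming from second $\nabla h$-derivatives of $\log\Psi$. Consider removing $-b^{11}\partial_{p_k}\log\Psi\,h_{k11}$ (note $h_{k11}=b_{11k}+O(|\nabla h|)$) via the critical-point relation $b^{11}b_{11k}=Ah_k/h-Bh_kb_{kk}$. This creates $B\sum_k\partial_{p_k}\log\Psi\,h_kb_{kk}$, which is of the same order $B\,\mathrm{tr}\,b$ as your good term. Its factor $\sup|\partial_p\log\Psi|\,|\nabla h|$ need not be below $1$: the Gaussian factor alone gives $|hu+\nabla h|\,|\nabla h|$, and $\nabla\log f$ enters as well. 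So enlarging $B$ does not help; the term must cancel exactly.

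The cancellation comes from the $\tfrac{B}{2}(h^2+|\nabla h|^2)$ part. Substitute the once-differentiated equation $\Phi_k/\Phi=(\log\Psi)_k-b^{ij}b_{ijk}$ into both $h_k\partial_t h_k$ and $h_kb^{ii}h_{kii}$. What remains is $-Bh_k(\log\Psi)_k$, whose $b$-dependent part is precisely $-B\sum_k\partial_{p_k}\log\Psi\,h_kb_{kk}$. With this cancellation, take $B$ larger than the constant in front of $b_{11}$, then $A$ large depending on $B$. You then get $0\le -b_{11}+C(A,B)$ at the maximum.

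\textbf{The duality step.} The determinant identity alone does not relate the individual radii of $\mathcal M_t$ and $\mathcal M_t^*$. You need the pointwise relation between their second fundamental forms; the paper is equally brief there.
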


\begin{proof}
Consider the auxiliary function with constants $A$ and $B$ to be determined later,
$$
P(u,t)=\lambda_{\max }\left(b_{ij}\right)h^A \exp\left( B|\nabla h|^2 \right), \quad \forall (u,t) \in \mathbb{S}^{n-1}\times[0,T).
$$
Here, $-A,B$ are positive constants to be choosen later. For any fixed $T'\in (0,T)$, assume $P(u,t)$ attains its maximum at $ (u_0,t_0) \in \mathbb{S}^{n-1}\times[0,T']$. By choosing a suitable orthonormal frame, we may assume $\left\{b_{ij}(u_0,t_0) \right\}$ is diagonal and $\lambda_{\max }\left(b_{i j}\right)\left(u_0,t_0 \right)=b_{11}\left(u_0,t_0\right)$.

Let $e_1, e_2, \cdots, e_{n-1}$ be the aforementioned orthonormal frame at the point $(u_0,t_0)\in\mathbb{S}^{n-1}\times[0,T']$. Then one has at $(u_0,t_0)$,
\begin{equation}\label{E:00022}
\begin{cases}
0=(\log P)_i=b^{11} b_{11 i}+A \frac{h_i}{h}+2 B \sum_{k} h_k h_{k i},\\
0\leq\frac{\partial\log P}{\partial t}=b^{11}(\partial_t h_{11}+\partial_t h)+A\frac{\partial_t h}{h}+2B\sum_{k} h_k\partial_t h_k. 
\end{cases}
\end{equation}
Without loss of generality, we can assume $t_0>0$. Moreover, $b^{ij}(\log P)_{ij}\leq 0$. A direct calculation yields
\begin{align*}
2Bb^{i j} \sum_{k}h_{k j} h_{k i}=2Bb_{ii}-4B(n-1)h+2Bh^2 b^{ii}.
\end{align*}
According to the Ricci identity \cite{BA87}, we have
\[
b_{i j11}=b_{11i j}-\delta_{ij} b_{11}+\delta_{11} b_{ij}-\delta_{i 1} b_{1j}+\delta_{1 j} b_{1i} .
\]
Then we have 
\begin{align}\label{E:4}
b^{ij}(\log P)_{ij}
=&b^{11}b^{ij} b_{11 i j}-\left(b^{11}\right)^2 b^{ij} b_{11 i} b_{11 j}+\frac{A(n-1)}{h}-\frac{A}{h^2}b^{ij}h_i h_j\\
&-(A+2B|\nabla h|^2)b^{ii}+2Bb^{ij}\sum_{k} h_{k j} h_{k i}+2Bb^{ij} \sum_{k} h_k b_{i j k}\nonumber\\
=& b^{11}b^{ij} b_{11 i j}-\left(b^{11}\right)^2 b^{ij} b_{11 i} b_{11 j}+\frac{A(n-1)}{h}-\frac{A}{h^2}b^{ij}h_i h_j\nonumber\\
&-(A+2B|\nabla h|^2-2Bh^2)b^{ii}-4B(n-1)h+2Bb_{i i}+2Bb^{ij} \sum_{k} h_k b_{i j k}.\nonumber\\
=&b^{11}b^{ij} [b_{i j 11}+\delta_{ij} b_{11}-\delta_{11} b_{ij}+\delta_{i 1} b_{1j}-\delta_{1 j} b_{1i}]-\left(b^{11}\right)^2 b^{ij} b_{11 i} b_{11 j}\nonumber\\
&+\frac{A(n-1)}{h}-\frac{A}{h^2}b^{ij}h_i h_j-(A+2B|\nabla h|^2-2Bh^2)b^{ii}\nonumber\\
&-4B(n-1)h+2Bb_{ii}+2Bb^{ij} \sum_{k}h_k b_{i j k}\nonumber.
\end{align}
By Lemma \ref{lem4.1}, at the maximal point $(u_0,t_0)$, we can select an  $A<0$ such that
\begin{equation}\label{E:412}
\begin{aligned}
b^{ij}(\log P)_{ij}
\geq& b^{11}b^{ii} b_{ii11}-\left(b^{11}\right)^2 b^{ii} b_{11 i} b_{11i}+\frac{A(n-1)}{h}-\frac{A}{h^2}b^{ii}h^2_i \\
&-(A+2B|\nabla h|^2-2Bh^2)b^{ii}-4B(n-1)h+2Bb_{i i}+2Bb^{ii} \sum_{k}h_k b_{i i k}\\
\geq& b^{11}b^{ii}b_{i i11}-\left(b^{11}\right)^2 b^{ii} b_{11 i} b_{11 i}-(A+2B|\nabla h|^2-2Bh^2)b^{ii}\\
&-2B b_{ii}+ 2B  b^{ii} \sum_{k} h_k b_{iik}-CB.
\end{aligned}
\end{equation}
Let
\[
\Lambda(u,t)=\log \left(fe^{-\frac{1}{2}\sum_{i=1}^s|(h (u,t)u+\nabla h)\cdot {\bf e}_i|^2}\frac{\rho^{n+1}}{h^{n}}\right)=\log \left(fe^{-\frac{1}{2}\rho^2(x,t)\sum_{i=1}^s|x\cdot {\bf e}_i|^2}\frac{\rho^{n+1}}{h^{n}}\right),
\]
where $x,u\in \mathbb{S}^{n}$ satisfy
\[
    \rho(x,t)=\left(h(u,t)^2 +|\nabla h (u,t)|^2\right)^{\frac{1}{2}}.
\]
A direct calculation yields
\begin{equation}\label{0616-2}
\begin{aligned}
\Lambda_k&=\frac{f_k}{f}+(n+1)\frac{\rho_k}{\rho}-n\frac{h_k}{h} -\sum_{i=1}^s\left((h (u,t)u+\nabla h)\cdot {\bf e}_i\right) \left( h_k u  +h\nabla_k u+\nabla h_k\right)\cdot {\bf e}_i\\
&=\frac{f_k}{f}+(n+1)\frac{\rho_k}{\rho}-n\frac{h_k}{h} -\sum_{i=1}^s\left((h (u,t)u+\nabla h)\cdot {\bf e}_i\right) \left( h_k u  +he_k+\nabla h_k\right)\cdot {\bf e}_i\\
&=\frac{f_k}{f}+(n+1)\frac{h_k b_{kk}}{\rho}-n\frac{h_k}{h} -\sum_{i=1}^s\left((h (u,t)u+\nabla h)\cdot {\bf e}_i\right) \left( h_k u  +he_k+\nabla h_k\right)\cdot {\bf e}_i
\end{aligned}
\end{equation}
Moreover, 
\begin{equation}\label{0616-3}
\begin{aligned}
\Lambda_{11}=&\frac{f_{11}f-f^2_1}{f^2}+(n+1)\frac{\left(h_{11}b_{11}+h_1b_{111}\right)\rho-\rho_1h_1b_{11}}{\rho^2}-n\frac{h_{11}h-h_1^2}{h^2}-\sum_{i=1}^s\left| \left( h_{1} u  +he_1+\nabla h_1\right)\cdot {\bf e}_i \right|^2\\
&-\sum_{i=1}^s\left((h (u,t)u+\nabla h)\cdot {\bf e}_i\right) \left( h_{11} u +2h_1 e_1 +h_1 \nabla_{e_1}{e_1}+\nabla h_{11}\right)\cdot {\bf e}_i.\\
=&\frac{f_{11}f-f^2_1}{f^2}+(n+1)\frac{\left(h_{11}b_{11}+h_1b_{111}\right)\rho-h^2_1 b^2_{11}}{\rho^2}-n\frac{h_{11}h-h_1^2}{h^2}-\sum_{i=1}^s\left| \left( h_{1} u  +he_1+\nabla h_1\right)\cdot {\bf e}_i \right|^2\\
&-\sum_{i=1}^s\left((h (u,t)u+\nabla h)\cdot {\bf e}_i\right) \left( h_{11} u +2h_1  e_1+\nabla h_{11}\right)\cdot {\bf e}_i\\
&\geq C_1 (1+b_{11}+ b_{11}^2) + \sum_{k=1}^{s} C(k)b_{11k}.
\end{aligned}
\end{equation}
By \eqref{E:00020}, we have
\begin{equation}\label{E:00021}
    \log[(2\pi)^{\frac{s}{2}}h\tau(t)-\partial_t h]= \Lambda-\log S,
\end{equation}
where $S:=\det(h_{ij}+h\delta_{ij})$. Then differentiating \eqref{E:00021} at the point $(u_0,t_0)$ with respect to $e_k$, 
we have
\begin{equation}\label{ADD20200601-1}
\frac{(2\pi)^{\frac{s}{2}}\tau  h_k -\partial_t h_k}{(2\pi)^{\frac{s}{2}}h\tau-\partial_t h}=\Lambda_k-b^{ij}  b_{ijk}.
\end{equation}
And we differentiate \eqref{E:00021} with respect to $e_1$ twice,
\begin{equation}\label{ADD12-2}
\begin{aligned}
\frac{(2\pi)^{\frac{s}{2}}\tau  h_{11} -\partial_t h_{11}}{(2\pi)^{\frac{s}{2}}h\tau-\partial_t h}&=\left(\frac{(2\pi)^{\frac{s}{2}}\tau  h_1 -\partial_t h_1}{(2\pi)^{\frac{s}{2}}h\tau-\partial_t h}\right)^2+\Lambda_{11}-b^{ii} b_{ii11}+b^{ii}b^{jj}(b_{ij1})^2\\
&\geq \Lambda_{11}-b^{ii} b_{ii11}+b^{ii}b^{jj}(b_{ij1})^2.
\end{aligned}
\end{equation}
At the point $(u_0,t_0)$, \eqref{ADD20200601-1} yields
\begin{equation}\label{ADD20200601-1-1}
\frac{\sum_{k}h_k\partial_t h_k}{(2\pi)^{\frac{s}{2}}h\tau-\partial_t h}=\frac{(2\pi)^{\frac{s}{2}}\tau  \sum_k h^2_k }{(2\pi)^{\frac{s}{2}}h\tau-\partial_t h}-\sum_{k}h_k\Lambda_k+b^{ii}\sum_{k} h_k b_{iik}.
\end{equation}
Dividing \eqref{E:00022} by $(2\pi)^{\frac{s}{2}}h\tau-\partial_t h$, we have
\begin{equation}\label{ADD12-1}
\begin{aligned}
&\frac{b^{11}(\partial_t h_{11}+\partial_t h)}{(2\pi)^{\frac{s}{2}}h\tau-\partial_t h}+A\frac{\partial_t h}{h\left( (2\pi)^{\frac{s}{2}}h\tau-\partial_t h\right)}+2B\frac{\sum_{k} h_k\partial_t h_k}{(2\pi)^{\frac{s}{2}}h\tau-\partial_t h}\\
&=\frac{b^{11}\left(\partial_t h_{11}-(2\pi)^{\frac{s}{2}}\tau h_{11}+(2\pi)^{\frac{s}{2}}\tau b_{11}-(2\pi)^{\frac{s}{2}}\tau h+\partial_t h\right)}{(2\pi)^{\frac{s}{2}}h\tau-\partial_t h}\\
&\quad+A\frac{\partial_t h}{h\left( (2\pi)^{\frac{s}{2}}h\tau-\partial_t h\right)}+2B\frac{\sum_{k} h_k\partial_t h_k}{(2\pi)^{\frac{s}{2}}h\tau-\partial_t h}\\
&=\frac{b^{11}(\partial_t h_{11}-(2\pi)^{\frac{s}{2}}\tau h_{11})}{(2\pi)^{\frac{s}{2}}h\tau-\partial_t h}+2B\frac{\sum_{k} h_k\partial_t h_k}{(2\pi)^{\frac{s}{2}}h\tau-\partial_t h}\\
&\quad-\frac{A}{h}+\frac{\tau(A+1)(2\pi)^{\frac{s}{2}}}{ (2\pi)^{\frac{s}{2}}h\tau-\partial_t h}+b^{11}\geq 0.
\end{aligned}
\end{equation}
By \eqref{E:412}, \eqref{ADD12-2}, \eqref{ADD20200601-1}, \eqref{ADD12-1} and \eqref{ADD20200601-1-1} we have
\begin{equation}\label{0616-4}
\begin{aligned}
 b^{ij}(\log P)_{ij} \geq  &b^{11}\left(-\frac{(2\pi)^{\frac{s}{2}}\tau  h_{11} -\partial_t h_{11}}{(2\pi)^{\frac{s}{2}}h\tau-\partial_t}+\Lambda_{11}+b^{ii}b^{jj}(b_{ij1})^2\right)-b^{ii} \left( A\frac{h_i}{h}+2B\sum_k h_k h_{ki}\right)\\
 &+2B \left( \sum_{k} h_k \Lambda_k+\frac{\sum_{k}h_k\partial_t h_k}{(2\pi)^{\frac{s}{2}}h\tau-\partial_t h}-\frac{(2\pi)^{\frac{s}{2}}\tau  \sum_k h^2_k }{(2\pi)^{\frac{s}{2}}h\tau-\partial_t h}\right)\\
 &-(A+2B|\nabla h|^2-2Bh^2)b^{ii}+2B b_{ii}-CB\\
\geq & b^{11}\left(\Lambda_{11}+b^{ii}b^{jj}(b_{ij1})^2\right)-b^{ii} \left( A\frac{h_i}{h}+2B\sum_k h_k h_{ki}\right)\\
 &+2B  \sum_{k} h_k \Lambda_k-\frac{(2\pi)^{\frac{s}{2}}\tau  \sum_k h^2_k }{(2\pi)^{\frac{s}{2}}h\tau-\partial_t h}+\frac{A}{h}-\frac{\tau(A+1)(2\pi)^{\frac{s}{2}}}{ (2\pi)^{\frac{s}{2}}h\tau-\partial_t h}-b^{11}.
\end{aligned}
\end{equation}
Since
\begin{align*}
\text{tr}\ b_{ij} \sum_{j,k} b^{jj}b^2_{ijk}&\geq \left( \sum_k b_{kk}\right) \left( \sum_k b^{kk}b^2_{ikk} \right)\\
&\geq \left( \sum_{k} \sqrt{b_{kk}} \sqrt{b^{kk}b^2_{ikk}}\right)^2\\
&= \left( \sum_k |b_{kki}|\right)^2\\
&\geq \left( \sum_k b^2_{kki}\right)^2,
\end{align*}
it follows from \eqref{0616-4} that
\begin{equation}
\begin{aligned}
0\geq & b^{11}\left(\Lambda_{11}+b^{ii}b^{jj}(b_{ij1})^2\right)-b^{ii} \left( A\frac{h_i}{h}+2B\sum_k h_k h_{ki}\right)\\
 &+2B \left( \sum_{k} h_k \Lambda_k-\frac{(2\pi)^{\frac{s}{2}}\tau  \sum_k h^2_k }{(2\pi)^{\frac{s}{2}}h\tau-\partial_t h}\right)+\frac{A}{h}-\frac{\tau(A+1)(2\pi)^{\frac{s}{2}}}{ (2\pi)^{\frac{s}{2}}h\tau-\partial_t h}-b^{11}\\
\geq & b^{11}\left(\Lambda_{11}+b^{ii}b^{jj}(b_{ij1})^2\right)+2B  \sum_{k} h_k \Lambda_k-b^{ii} \left( A\frac{h_i}{h}+2B\sum_k h_k h_{ki}\right)\\
 &-\frac{(2\pi)^{\frac{s}{2}}\tau(A+1-2B \sum_k h^2_k) }{(2\pi)^{\frac{s}{2}}h\tau-\partial_t h}+\frac{A}{h}-b^{11}\\
 \geq & b^{11}\Lambda_{11}+2B  \sum_{k} h_k \Lambda_k-b^{ii} \left( A\frac{h_i}{h}+2B h_ib_{ii}+2Bh_i h \right)\\
 &-\frac{(2\pi)^{\frac{s}{2}}\tau(A+1-2B \sum_k h^2_k) }{(2\pi)^{\frac{s}{2}}h\tau-\partial_t h}+\frac{A}{h}-b^{11}.
\end{aligned}
\end{equation}
By Lemma \ref{lem4.2}, we have
\[
|\nabla h|\leq C,
\]
for a positive constant $C$ independent of $t$. So, taking $A=-n+2BC^2$, we conclude that
\[
A+1-2B\sum_{k} h^2_k <0.
\]
It follows from \eqref{0616-4} that
\begin{equation}\label{0616-5}
b^{11}\Lambda_{11}+2B  \sum_{k} h_k \Lambda_k-b^{ii} \left( A\frac{h_i}{h}+2B\sum_k h_k h_{ki}\right)+\frac{A}{h}-b^{11} \leq 0.
\end{equation}
Note that
\begin{equation*}
   \left\{
\begin{aligned}  
&b^{ii} h_{ii}=b^{ii}(b_{ii}-h)=n-1-h\ \text{tr} b^{-1},\\
&\sum_{k}b^{ii} h_{ki}^{2}= b^{ii}(b_{ii}^2-2hb_{ii}+h^2)=-2(n-1)h+\text{tr}b+h^2\text{tr}b^{-1},\\
&\sum_{k}b^{ii}h_k h_{kii}=\sum_{k} h_k b^{ii}b_{iik}-b^{ii}h_i^2.
\end{aligned}  
\right.
\end{equation*}
By \eqref{0616-2}, \eqref{0616-3} and \eqref{E:00022}, we have
\begin{align*}
&b^{11}\Lambda_{11}+2B\sum_{k} h_k \Lambda_k \\
&=b^{11}\left(\frac{f_{11}f-f^2_1}{f^2}+(n+1)\frac{\rho_{11}\rho-\rho_1^2}{\rho^2}-n\frac{h_{11}h-h_1^2}{h^2}-\sum_{i=1}^s\left| \left( h_{1} u  +he_i+\nabla h_1\right)\cdot {\bf e}_i \right|^2 \right)\\
&\quad -b^{11}\left(\sum_{i=1}^s\left((h (u,t)u+\nabla h)\cdot {\bf e}_i\right) \left( h_{11} u +2h_1  e_1+\nabla h_{11}\right)\cdot {\bf e}_i \right)\\
&\quad + 2B h_k\left( \frac{f_k}{f}+(n+1)\frac{h_k b_{kk}}{\rho}-n\frac{h_k}{h} -\sum_{i=1}^s\left((h (u,t)u+\nabla h)\cdot {\bf e}_i\right) \left( h_k u  +he_k+\nabla h_k\right)\cdot {\bf e}_i \right)\\ 
&\geq b^{11} \left( C_1 (1+b_{11}+ b_{11}^2) + \sum_{k} C(k) b_{11k} \right) + C_3B\left(1+\sum_{k} b_{kk}\right)\\
&\geq C_1 (b^{11}+1+ b_{11}) + \sum_{k} C(k) \left( -A\frac{h_k}{h}-2Bh_kb_{kk}+2Bh\right)  + C_3B\left(1+\sum_{k} b_{kk}\right)\\
&\geq C b^{11}+C b_{11} +C(-A+B+1)-CB \sum_{k} b_{kk}.
\end{align*}
Hence, by the above inequality and \eqref{0616-5}  we have
\begin{equation*}
    \left(-\frac{h_i}{h}A\right)b^{ii}+C^2Bb_{ii}-CB-C(1+b^{11}+Bb_{11})\leq 0,
\end{equation*}
If we take
\begin{equation*}
    B>\max\left\{\frac{C}{2C^2},\frac{1+2C}{2-3C}\right\}
\end{equation*}
in the above inequality, then $b_{11}(u_0,t_0)$ is bounded from above. Combining this with Lemma \ref{lem4.1} and Lemma \ref{lem4.2}, it is clear that the principal radii of curvatures $\lambda^{*}_i$  of $M_t^*$ have a uniform upper bound. Together with \eqref{New1}, the proof is complete.
\end{proof}

We finish this section with the higher order estimates and the existence for all times.

\begin{lemma}\label{lem4.6}
The solutions to the flow equation \eqref{E:0002} exist for all times $t\in(0,\infty)$ in the class $C(\mathbb{S}^{n-1}\times [0,t])$ with
\[
\| h \|_{C^{m+2}(\mathbb{S}^{n-1}\times [0,\infty)])} \leq C\quad \forall m \geq 1.
\]
\end{lemma}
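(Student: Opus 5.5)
The plan is to combine the uniform estimates already established into uniform parabolicity of \eqref{E:0002}, and then apply standard regularity theory for concave fully nonlinear parabolic equations together with a continuation argument. By Lemma \ref{lem4.1}, Lemma \ref{lem4.2} and Lemma \ref{lem4.3}, the quantities $h$, $|\nabla h|$, $\rho$ and $\tau(t)$ are bounded above and below by positive constants independent of $t$. The Gaussian-type factor $e^{-\frac12\sum_{i=1}^s|X\cdot\mathbf e_i|^2}$ with $X=hx+\nabla h$ is therefore pinched between two positive constants, and so are its first derivatives in $(x,\nabla h, h)$. By Lemma \ref{lem2}, the eigenvalues of $b_{ij}=h_{ij}+h\delta_{ij}$ lie in $[1/C,C]$ on $\mathbb S^{n-1}\times[0,T)$. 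Hence the $C^2$ norm of $h(\cdot,t)$ is bounded independently of $t$, and $\partial_t h$ is bounded as well, since it is expressed through the equation.

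Next I would rewrite \eqref{E:0002} in the form
\begin{equation*}
\partial_t h=\mathcal F(x,h,\nabla h,\nabla^2 h),
\end{equation*}
or, equivalently, as
\begin{equation*}
\log\big(\partial_t h+(2\pi)^{\frac s2}\tau h\big)=\log\det(b_{ij})+\log\big(f h\,e^{-\frac12\sum_{i=1}^s|X\cdot\mathbf e_i|^2}\big).
\end{equation*}
On the set of positive definite matrices with eigenvalues in $[1/C,C]$, the operator $\log\det$ is concave and uniformly elliptic, with ellipticity constants controlled by $C$. The lower-order term is smooth with bounded derivatives, because $f\in C^\infty$ and $h,\nabla h$ are controlled. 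Krylov--Safonov estimates then give a H\"older bound for $\nabla^2 h$ and $\partial_t h$, and the Evans--Krylov theorem for concave parabolic equations upgrades this to $\|h\|_{C^{2,\alpha}}$ bounds uniform in $t$. The nonlocal term $\tau(t)$ needs separate treatment: it is an integral of $\rho$, so its time H\"older continuity follows from the bound on $\partial_t\rho$ coming from \eqref{E:0005}. The $x$-dependence of $\tau$ is trivial.

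Once the $C^{2,\alpha}$ bound is in hand, I would differentiate the equation. Each derivative $\partial h$ satisfies a linear uniformly parabolic equation whose coefficients are $C^{\alpha}$. Schauder theory then yields $C^{3,\alpha}$ bounds, and induction on $m$ gives $\|h\|_{C^{m+2}}\le C_m$ for every $m\ge1$, with constants independent of $T$.

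For long-time existence, short-time existence follows from the standard theory for strictly parabolic equations, since the linearization $HS^{ij}\nabla_{ij}$ is positive definite at a uniformly convex initial datum. Suppose the maximal time were $T<\infty$. The uniform bounds would let $h(\cdot,t)$ converge in $C^\infty$ as $t\to T$ to a smooth, uniformly convex limit, because Lemma \ref{lem2} keeps $b_{ij}$ positive definite. Restarting the flow from this limit would extend the solution past $T$, a contradiction. So $T=\infty$.

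I expect the main obstacle to be the application of Evans--Krylov, and specifically the nonlocal coefficient $\tau(t)$ together with the need for concavity of the operator in its Hessian argument. The log formulation handles the concavity. For $\tau$, I would first establish its Lipschitz continuity in time directly from $|\partial_t h|\le C$, so that it enters only as a H\"older-continuous-in-time lower-order coefficient.
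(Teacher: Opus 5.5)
Your overall plan matches the paper's proof. That proof says only that Lemmas \ref{lem4.1}, \ref{lem4.2}, \ref{lem4.3} and \ref{lem2} make \eqref{E:0002} uniformly parabolic, and then appeals to standard parabolic theory. Your handling of $\tau(t)$, the Schauder bootstrap and the continuation argument are fine. The gap is in the step you yourself flag as the crux: the logarithmic rewriting does \emph{not} make the operator concave.

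Parabolic Evans--Krylov needs the equation in solved form $\partial_t u=F(x,t,u,\nabla u,\nabla^2 u)$ with $F$ concave in $\nabla^2 u$. Equivalently, $G(\nabla^2 u,\partial_t u)=0$ with $G$ jointly concave and decreasing in $\partial_t u$. Taking logarithms of both sides does not change the solved form $\partial_t h=fh\,e^{-\frac12\sum_{i=1}^s|X\cdot\mathbf e_i|^2}\det(b_{ij})-(2\pi)^{\frac s2}\tau h$. For $n\ge 3$, the map $b\mapsto\det b$ is neither concave nor convex, even on matrices with pinched eigenvalues; already $(a,d)\mapsto\det\mathrm{diag}(a,d)=ad$ has indefinite Hessian.

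In your form, the failure appears as convexity of $-\log(\partial_t h+(2\pi)^{\frac s2}\tau h)$ in the $\partial_t h$ slot. Differentiate twice in a spatial direction $e$. Modulo lower-order terms, the term $(\partial_t h_e+\cdots)^2/(\partial_t h+\cdots)^2$ combines with the second derivative of $\log\det$ into a multiple of $\sigma_2(b^{-1/2}(\nabla_e b)b^{-1/2})$. That quantity has no sign, so $\nabla_{ee}h$ is not a subsolution of the linearized equation. Relatedly, Krylov--Safonov by itself gives H\"older bounds for $\partial_t h$ (via the $t$-differentiated linear equation), not for $\nabla^2 h$.

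Either of two standard repairs closes this, after which the rest of your argument goes through unchanged.

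(i) Run the $C^{2,\alpha}$ step on the polar flow \eqref{E:00020}. It has the form $\partial_t h^*=-\Phi(u,h^*,\nabla h^*)/\det(b^*_{ij})+(2\pi)^{\frac s2}\tau h^*$ with $\Phi>0$. The map $b^*\mapsto -1/\det b^*=-\exp(-\log\det b^*)$ is concave and increasing on positive definite matrices. Lemmas \ref{lem4.1}, \ref{lem4.2}, \ref{lem2} and \eqref{New1} give uniform parabolicity. So Krylov's estimate yields $C^{2,\alpha}$ bounds for $h^*$, hence for $\rho=1/h^*$, and then for $h$.

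(ii) Alternatively, $w=\partial_t h$ solves a linear uniformly parabolic equation with bounded measurable coefficients and bounded source $-(2\pi)^{\frac s2}\tau' h$. Krylov--Safonov then gives $w\in C^\alpha$. At each fixed $t$, the equation $\log\det(\nabla^2 h+hI)=\log(w+(2\pi)^{\frac s2}\tau h)-\log(fh\,e^{-\frac12\sum_{i=1}^s|X\cdot\mathbf e_i|^2})$ is concave and uniformly elliptic with $C^\alpha$ right-hand side. The elliptic Evans--Krylov/Caffarelli estimate gives $\|h(\cdot,t)\|_{C^{2,\alpha}}\le C$. H\"older continuity of $\nabla^2 h$ in $t$ follows by interpolation with $|\partial_t h|\le C$.
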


\begin{proof}
 Since \eqref{E:0002} is a parabolic equation, one obtains the short-time existence of its solutions via the implicit function theorem. Then Lemma \ref{lem4.1}, Lemma \ref{lem4.2}, Lemma \ref{lem4.3}, and Lemma \ref{lem2} imply that the flow equation \eqref{E:0002} is uniformly parabolic on the interval $[0,T)$. Therefore, by standard parabolic theory and Krylov-Safonov estimates \cite{KS81}, the smooth solutions of the flow equation \eqref{E:0002} exist for all time.
\end{proof}

\subsection{Proof of Main Theorem}

First, we prove Theorem \ref{thm2} below.

\begin{proof}[\bf Proof of the Theorem \ref{thm2}]
By Lemma \ref{lem10-1}, Lemma \ref{lem4.1}, and \eqref{E:0001}, we know that $V_{\text{mix}}(t)$ is non-decreasing and is uniformly bounded above with respect to $t$. In particular, there exists a positive constant $C$ independent of $t$ such that for any $t\geq0$,
\begin{equation*}
    V_{\text{mix}}(t)\leq C.
\end{equation*}
Hence for any $t\geq0$,
\begin{equation*}
    \int_0^{t}\frac{d}{dt}V_{\text{mix}}(t)dt=V_{\text{mix}}(t)-V_{\text{mix}}(0)\leq V_{\text{mix}}(t)\leq C,
\end{equation*}
which means that 
\begin{equation*}
     \int_0^{\infty}\frac{d}{dt}V_{\text{mix}}(t)dt\leq C.
\end{equation*}
Combining this with $\frac{d}{dt}V_{\text{mix}}(t)\geq0$ implies there exists a subsequence $t_j\rightarrow\infty$ such that 
\begin{equation*}
    \frac{d}{dt}V_{\text{mix}}(t_j)\rightarrow0,
\end{equation*}
which implies $\frac{\partial}{\partial t}h(\cdot, t_j)\rightarrow0$ in the flow equation \eqref{E:0002} . Moreover, from Lemma \ref{lem4.6}, we deduce that $h(\cdot, t_j)$ will converge to a smooth function $h_{\infty}$, which solves equation \eqref{E:1.1}.
\end{proof}

\begin{proof}[\bf Proof of Theorem \ref{thm98}]
 Let $0<f_j\in C^{\infty}(\mathbb{S}^{n-1})$ be such that $d\mu_j=f_jdu$ converges weakly to $d\mu$ as $j\rightarrow \infty$, and $d\mu$ is not concentrated on a hemisphere. Let $h_j$ be a solution of the following PDE
\begin{equation}\label{E:87}
\frac{1}{(2\pi)^{\frac{s}{2}}}e^{-\frac{1}{2}\sum_{i=1}^s |(h_K (u)u+\nabla h_K (u))\cdot {\bf e}_i |^2} \det(\nabla^2 h_K (u) +h_K (u)\delta_{ij})=f(u)\tau,
\end{equation}
where
\begin{equation*}
\tau=\frac{\int_{\mathbb{S}^{n-1}}e^{-\frac{1}{2}\rho^2\sum_{i=1}^s|u\cdot {\bf e}_i|^2}\rho^n du}{(2\pi)^{\frac{s}{2}}\int_{\mathbb{S}^{n-1}}f^{-1}hdu}
\end{equation*}
for $f = f_j$, where each $f_j$ is positive and smooth. Then
\begin{equation*}
\int_{\mathbb{S}^{n-1}}dS_{V_{\text{mix}, K_j}}=\int_{\mathbb{S}^{n-1}}\tau_jd\mu_j,
\end{equation*}
where
\begin{equation*}
\tau_j=\frac{\int_{\mathbb{S}^{n-1}}e^{-\frac{1}{2}\rho_j^2\sum_{i=1}^{s}|(\nu\cdot {\bf e}_i)|^2}\rho_j^n d\nu}{(2\pi)^{\frac{s}{2}}\int_{\mathbb{S}^{n-1}}f^{-1}_jh_jdu}.
\end{equation*}
Our goal is to prove
\begin{equation}\label{E:91}
\int_{\mathbb{S}^{n-1}}dS_{V_{\text{mix}},K_j}\rightarrow\int_{\mathbb{S}^{n-1}}dS_{V_{\text{mix}},K}.
\end{equation}
According to Theorem \ref{thm3.3}, it is sufficient to demonstrate that $K_j\rightarrow K$ in the Hausdorff metric to establish \eqref{E:91}. Hence, the Blaschke Selection Theorem plays a crucial role, which means the next step is to obtain an $R$ independent of $j$ such that $K_j\subset B_R$.

We claim that
\begin{equation}\label{E:88}
M_j:=\frac{1}{|\mu_j|}\min_{u_j\in\mathbb{S}^{n-1}}\int_{\mathbb{S}^{n-1}}\langle u_j, \hat{u}\rangle_{+}d\mu_j(\hat{u})\geq C_0>0.
\end{equation}
Proceeding by contradiction, suppose that there exists a subsequence of ${M_j}$, denoted by $M_{j^{'}}$, converging to 0. In particular, for some $u_{j^{'}}\in\mathbb{S}^{n-1}$,
\begin{equation}\label{E:89}
M_{j^{'}}:=\frac{1}{|\mu_{j^{'}}|}\min_{u_{j^{'}}\in\mathbb{S}^{n-1}}\int_{\mathbb{S}^{n-1}}\langle u_{j^{'}}, \hat{u}\rangle_{+}d\mu_{j^{'}}(\hat{u})\rightarrow0.
\end{equation}
Since $\mathbb{S}^{n-1}$ is compact, there exists a subsequence $u_{j^{''}}$ of $u_{j^{'}}$ such that $u_{j^{''}}\rightarrow u$. Take
\begin{equation*}
M_{j^{''}}:=\frac{1}{|\mu_{j^{''}}|}\min_{u_{j^{''}}\in\mathbb{S}^{n-1}}\int_{\mathbb{S}^{n-1}}\langle u_{j^{''}}, \hat{u}\rangle_{+}d\mu_{j^{''}}(\hat{u}).
\end{equation*}
On the one hand, $M_{j^{''}}\rightarrow0$ by assumption \eqref{E:89}. On the other hand, since the support function is continuous on $\mathbb{S}^{n-1}$, the condition $u_{j^{''}}\rightarrow u$ implies $\langle u_{j^{''}},\hat{u} \rangle_{+}\rightarrow \langle u,\hat{u} \rangle_{+}$ uniformly on $\mathbb{S}^{n-1}$. Therefore,
\begin{equation*}
M_{j^{''}}\rightarrow\frac{1}{|\mu|}\min_{u\in\mathbb{S}^{n-1}}\int_{\mathbb{S}^{n-1}}\langle u, \hat{u}\rangle_{+}d\mu(\hat{u})=0
\end{equation*}
which contradicts
\begin{equation*}
\frac{1}{|\mu|}\min_{u\in\mathbb{S}^{n-1}}\int_{\mathbb{S}^{n-1}}\langle u, \hat{u}\rangle_{+}d\mu(\hat{u})\geq C_0>0
\end{equation*}
by \eqref{E:0009}. Finally, \eqref{E:00010} and \eqref{E:88}  yield that $h_{K_j}$ has a uniform upper bound, that is
\begin{equation*}
h_{K_j}\leq r\frac{|\mu_j|}{\min_{u_j\in\mathbb{S}^{n-1}}\int_{\mathbb{S}^{n-1}}\langle u_j, \hat{u}\rangle_{+}d\mu_j(\hat{u})}\leq\frac{r}{C_0}.
\end{equation*}

Next, we fix our attention to the lower bound of $h_{K_j}$. Proceed by contradiction and assume that there exists some $u_0\in\mathbb{S}^{n-1}$ such that $h_{K_j}(u_0)\rightarrow0$. Lemma \ref{lem10-1} implies
\begin{equation*}
V_{\text{mix}}(K_t)=V_{\text{mix}}(t)\geq V_{\text{mix}}(0)=\frac{1}{2}.
\end{equation*}
Hence, if $K_{\infty}$ is a solution of the equation \eqref{E:87} with $K_t\rightarrow K_{\infty}$, then Lemma \ref{lem4.1} and Lemma \ref{lem4.2} allows us to apply the dominated convergence theorem. So we have
\begin{equation*}
\lim_{t\rightarrow\infty}V_{\text{mix}}(K_t)=V_{\text{mix}}(K_{\infty})\geq\frac{1}{2}.
\end{equation*}
Thus by the assumption that the sequence of $K_j$ are the solutions of the equation \eqref{E:87}, we also have
\begin{equation}\label{E:81}
V_{\text{mix}}(K_{j})\geq\frac{1}{2}.
\end{equation}
On the other hand, by the assumption, we have for some $u_0\in\mathbb{S}^{n-1}$,
\begin{equation*}
\lim_{j\rightarrow\infty}h_{K_j}(u_0)=0.
\end{equation*}
Hence, for any $\varepsilon>0$ small and sufficiently large $j$, $h_{K_j}(u_0)<\varepsilon$. This means that
\begin{equation}\label{E:86}
K_j\subset\{y\in\mathbb{R}^n; \langle y, u_0\rangle\leq\varepsilon\}.
\end{equation}
We also know that $K_j\subset B_R$ for some $R>0$ from the known upper bound of $h_{K_j}$. Combining this with \eqref{E:86}, we get
\begin{equation}\label{E:85}
K_j\subset B_R\cap\{y\in\mathbb{R}^n: \langle y, u_0\rangle\leq\varepsilon\}.
\end{equation}
From Proposition \ref{prop1} and the assumption that $K_j\in\mathcal{K}^n_{e}$, we obtain for any $\varepsilon>0$ small enough,
\begin{equation*}
V_{\text{mix}}(K_j)=2V_{\text{mix}}(K_j^{+})< V_{\text{mix}}( B_R^{+}\cap\{y\in\mathbb{R}^n; \langle y, u_0\rangle_{+}\leq\varepsilon\})\leq V_{\text{mix}}(\{y\in\mathbb{R}^n; \langle y, u_0\rangle_{+}\leq\varepsilon\})\rightarrow0,
\end{equation*}
which is a contradiction to \eqref{E:81}.

\end{proof}

\section*{Funding Declarations} The first named author is supported by NSF DMS grant 2402038. 

\section*{Acknowledgments}
We would like to express our sincere gratitude to Professor Yong Huang for his guidance, support, and useful advice. We would also like to thank Jinrong Hu for the helpful discussions.

\end{document}